\documentclass[11pt]{amsart}
\usepackage{pdfpages}
\usepackage{booktabs}
\usepackage{hyperref}
\usepackage{float}
\usepackage{graphicx, import}
\graphicspath{ {./images/} }
\usepackage[latin1]{inputenc}
\usepackage{amsmath}
\usepackage{amsfonts}
\usepackage{amssymb}
\usepackage{amsthm}
\usepackage{amscd}
\usepackage{verbatim}
\usepackage{subfigure}
\usepackage{caption}
\usepackage{pinlabel}
\usepackage{stmaryrd}
\usepackage{enumerate, enumitem}
\usepackage{todonotes}
\usepackage{bm}
\usepackage{thmtools}
\usepackage{thm-restate}
\usepackage{lipsum}
\usepackage{setspace}
\usepackage{mathtools}
\usepackage[abs]{overpic}
\usepackage{fancyvrb}
\usepackage{float}
\usepackage{mathdots}
\usepackage{tikz}
\usetikzlibrary{arrows}
\usetikzlibrary{decorations.pathreplacing}
\usepackage{verbatim}
\usetikzlibrary{cd}
\tikzset{taar/.style={double, double equal sign distance, -implies}}
\tikzset{amar/.style={->, dotted}}
\tikzset{dmar/.style={->, dashed}}
\tikzset{aar/.style={->, very thick}}
\usepackage[section]{placeins}

\newtheorem{theorem}{Theorem}[section]

\newtheorem{lemma}[theorem]{Lemma}
\newtheorem{proposition}[theorem]{Proposition}

\newtheorem{corollary}[theorem]{Corollary}

\theoremstyle{definition}
\newtheorem{definition}[theorem]{Definition}

\theoremstyle{remark}
\newtheorem{remark}[theorem]{Remark}

\def \Hom{\textrm{Hom}}

\def\RP{\mathbb{RP}^3}
\def\F{\mathbb{F}}
\def\N{\mathbb{N}}
\def\C{\mathbb{C}}
\def\Q{\mathbb{Q}}
\def\R{\mathbb{R}}
\def\Z{\mathbb{Z}}

\def\M{M^{\textrm{inv}}}
\def\Lo{L_0^{\textrm{inv}}}
\def\Li{L_1^{\textrm{inv}}}
\def \talpha{\mathbb{T}_{\bm{\alpha}}}
\def \Sym{\textrm{Sym}}
\def \tbeta{\mathbb{T}_{\bm{\beta}}}

\def\HFK {\mathit{HFK}}

\newcommand\HFKhat{\widehat{\HFK}}

\author[Timothy Bates, Aakash Parikh]{Timothy Bates and Aakash Parikh}
\email{ap1792@math.rutgers.edu}
\email{tim.bates@math.rutgers.edu}
\address{Rutgers University, New Brunswick, NJ, USA}
\thanks{T. Bates was partially supported by NSF Grant DMS-2505573. A. Parikh was partially supported by the Air Force Office of Scientific Research
under awards numbered FA9550-23-1-0011 and FA9550-23-1-0400}
\numberwithin{equation}{section}
\title{A note on the knot Floer homology of freely 2-periodic knots and their quotients}
\begin{document}
\begin{abstract}  
A knot P in the three-sphere is freely 2-periodic if it is preserved setwise by a free order-two action. There is a natural projective quotient knot associated to P. We establish a rank inequality between the knot Floer homologies of P and its quotient as a consequence of Large's generalization of Seidel--Smith's localization spectral sequence associated to order 2 actions in Lagrangian Floer homology. As a corollary we obtain an inequality between the Seifert genus of P and the rational Seifert genus of its quotient. We also implement a program which computes the E2 page of this spectral sequence using a modification of Baldwin--Gillam's grid homology calculator. 
\end{abstract}
\maketitle
\section{Introduction}
\subsection{Main results}\label{mainresults}
A \emph{symmetric knot} $(P,\tau)$ is a knot $P\subset S^3$ along with a finite-order orientation-preserving diffeomorphism $\tau\colon (S^3,P)\to (S^3,P)$. A symmetric knot $P$ is \emph{freely 2-periodic} if $\tau$ is free of order 2. The quotient of $S^3$ by such an action is $\RP$. We fix $(P,\tau)$ to mean a freely 2-periodic knot from here and often omit $\tau$ from the notation. If $q\colon S^3\to S^3/\tau\cong\RP$ denotes the quotient map, then $\overline{P}:=q(P)$ is a knot in $\RP$ called the \emph{quotient knot of} $P$. The quotient knot $\overline{P}$ is a class $1$ knot, which means $[\overline{P}]$ generates $ H_1(\RP;\Z)\cong \Z/2\Z$. In Figure \ref{fig:10157} the freely 2-periodic symmetry of $10_{157}$ and its resulting quotient knot are displayed.
\begin{figure}[H]
    \centering
    \includegraphics[width=0.4\linewidth]{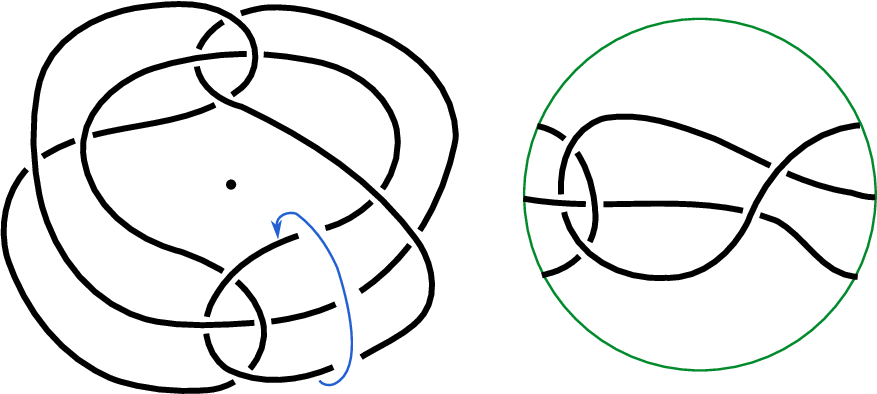}
    \caption{Left: The freely 2-periodic knot $P=10_{157}$. The symmetry $\tau$ of $P$ can be seen as rotation by 180 about the center dot followed by rotation by 180 in the direction of the blue curved arrow. Right: The quotient knot $\overline{P}\subset \mathbb{RP}^3$ in the disk model for $\mathbb{RP}^2$.}
    \label{fig:10157}
\end{figure}
Throughout this paper we will take $P$ to be a knot in $S^3$, $\overline{P}$ a class $1$ knot in $\RP$, and $K$ a knot in an arbitrary closed three-manifold $Y$.

Knot Floer homology is a suite of knot invariants due to Ozsv\'{a}th and Szab\'{o} \cite{OSknots} and independently Rasmussen \cite{RasmussenThesis}. We say that a knot $K$ in a three-manifold $Y$ is \emph{rationally nullhomologous} if $[K]=0\in H_1(Y,\Q)$. The version of knot Floer homology we use in this paper assigns to a rationally nullhomologous knot $K\subset Y$ a $\F$--vector space denoted $\HFKhat(Y,K)$, where throughout $\F:=\F_2$ is the field of two elements. Knot Floer homology for rationally nullhomologous knots was first defined in \cite{OSrational}. Notice that any knot in a rational homology $3$-sphere such as $\RP$ or $S^3$ is rationally nullhomologous. 

Knot Floer homology decomposes as a direct sum on several gradings. There is a homological (or Maslov) grading, and a relative $\textrm{Spin}^c$ grading which further gives rise to a $\textrm{Spin}^\textrm{c}$ grading and an Alexander grading. See Section \ref{gradings} for more background on $\textrm{Spin}^{c}$ and Alexander gradings in knot Floer homology, particularly for rationally nullhomologous knots, but here we summarize the salient points. There is only one $\textrm{Spin}^\textrm{c}$ structure on $S^3$ and the Maslov and Alexander gradings are both $\Z$ valued, so for knots $P\subset S^3$ we have the decomposition \[\HFKhat(S^3,P)=\bigoplus_{(M,A)\in\Z^2}\HFKhat_M(P,A)\] where $M$ is the Maslov grading and $A$ is the Alexander grading. For a class 1 knot $\overline{P}\subset \RP$, the situation is as follows. There are $\frac{1}{2}\Z+\frac{1}{4}$ valued Maslov and Alexander gradings, again denoted by $M$ and $A$ respectively. In addition, there are two $\textrm{Spin}^\textrm{c}$ structures on $\RP$, which we denote by $\textrm{Spin}^\textrm{c}(\RP)=\{\mathfrak{s}_0,\mathfrak{s}_1\}$. So we have the decomposition \[\HFKhat(\RP,\overline{P})=\bigoplus_{M,A\in\frac{1}{2}\Z-\frac{1}{4}} \HFKhat_M(\RP,\overline{P},A;\mathfrak{s}_0)\oplus \bigoplus_{M,A\in\frac{1}{2}\Z+\frac{1}{4}} \HFKhat_M(\RP,\overline{P},A;\mathfrak{s}_1).\] We often collapse one or more of the gradings. For instance, $\HFKhat(Y,K,A)\coloneq\bigoplus_{d, \mathfrak{s}}\HFKhat_d(Y,K,A;\mathfrak{s})$.

With this notation in place, the main result of this paper is the following spectral sequence in knot Floer homology for a freely 2-periodic knot $(P,\tau)$ with quotient $\overline{P}\subset \RP$. 
\begin{theorem}\label{mainthm}
    There is a spectral sequence with $E^1$ page equal to $\HFKhat(S^3,P)\otimes V\otimes \F[\theta,\theta^{-1}]$ and whose $E^\infty$ page is isomorphic to $\HFKhat(\RP,\overline{P})\otimes \F[\theta,\theta^{-1}]$. Furthermore, this spectral sequence splits along Alexander gradings, carrying grading $A$ to grading $\frac{2A+1}{4}$ for all integers $A$.
\end{theorem}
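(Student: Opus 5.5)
Our plan is to realize both knot Floer homologies as Lagrangian Floer homologies of a symplectic manifold with a $\Z/2$ action, and then invoke Large's generalization of the Seidel--Smith localization spectral sequence. We begin with a doubly pointed Heegaard diagram $(\Sigma,\bm{\alpha},\bm{\beta},w,z)$ for $\overline{P}\subset\RP$ and pull it back along the double cover $q\colon S^3\to\RP$. The preimage $\widetilde{\Sigma}$ is a genus $2g-1$ surface with a free involution $\tau$. The lifted curves $\widetilde{\bm{\alpha}},\widetilde{\bm{\beta}}$ number $2g$ each, and there are two lifts each of $w$ and $z$. Together these give a Heegaard diagram for $P$ with two extra basepoints of each type. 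Standard stabilization results for knot Floer homology with extra free basepoint pairs then identify the Floer homology of $\talpha,\tbeta\subset\Sym^{2g}(\widetilde{\Sigma}\setminus\{\text{basepoints}\})$ with $\HFKhat(S^3,P)\otimes V$, where $V$ is a $2$-dimensional vector space. This accounts for the $V$ factor in the $E^1$ page. The involution $\tau$ induces a symplectic involution on $\Sym^{2g}$ preserving $\talpha,\tbeta$. Its fixed set is naturally $\Sym^g(\Sigma)$, with fixed Lagrangians the tori for the original diagram, so the fixed-point Floer homology is $\HFKhat(\RP,\overline{P})$.

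Next we verify the hypotheses of Large's theorem. The central one is a stable normal trivialization: the normal bundle of the fixed set, with its Lagrangian subbundles from $\talpha,\tbeta$, must be stably trivialized compatibly. Following Hendricks' treatment of the branched double cover case, we would identify the normal bundle with a bundle built from the tangent data of $\Sym^g(\Sigma)$ twisted by the $\Z/2$ action. We would then check that the relevant obstruction vanishes; here Large's framework, which asks only for weaker "polarization" data than Seidel--Smith, should be sufficient. We also need exactness or monotonicity and the required control of holomorphic disks. These should follow, as in Hendricks' work, from choosing $\tau$-equivariant and sufficiently generic almost complex structures near the diagonal and working in the complement of the basepoint divisors. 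Granting these hypotheses, Large's theorem yields a spectral sequence from $HF(\talpha,\tbeta)\otimes\F[\theta,\theta^{-1}]$ to $HF(\talpha^{\tau},\tbeta^{\tau})\otimes\F[\theta,\theta^{-1}]$. This is the asserted $E^1$ and $E^\infty$.

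For the Alexander splitting, the spectral sequence comes from filtering the equivariant complex, and its differentials count $\tau$-equivariant disks. We would show that the relative Alexander grading difference of generators in the cover equals twice the difference $n_z-n_w$ of the projected domains. This holds because an equivariant domain is the pullback of a domain downstairs, so multiplicities at both lifts of $z$ and of $w$ agree. Hence the complex splits by Alexander grading, and equivariant generators in grading $A$ upstairs correspond to generators in a single grading downstairs, related affinely. To fix the constants, we compare absolute gradings using the rational Alexander grading formula of \cite{OSrational} and Section \ref{gradings}. The class $1$ condition, giving a rational Seifert surface whose preimage is a Seifert surface for $P$ (doubled appropriately), together with the basepoint doubling shift, should produce $A\mapsto \frac{2A+1}{4}$. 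We would confirm the constant on the unknot, whose quotient is the class $1$ core of $\RP$ with grading $\frac14$.

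We expect the main obstacle to be the stable normal trivialization. It requires a homotopy-theoretic computation of the relevant bundles over $\Sym^g(\Sigma)$ for a genuinely free quotient. This step is the technical heart, and we would lean on Large's relaxed hypotheses together with Hendricks' computations.
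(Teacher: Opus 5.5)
\textbf{There is a genuine gap: you do not establish hypothesis (2) of Theorem \ref{Largelocal}, which is the heart of the proof.} Your overall architecture matches the paper's: an equivariant four-pointed diagram, the fixed set identified with $\Sym^g$ of the quotient diagram, Large's localization theorem, then a grading comparison. Lifting an arbitrary doubly pointed diagram for $\overline{P}$ is a fine substitute for the paper's equivariant pringle-chip diagram built from a free diagram.

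\textbf{The missing step.} You leave hypothesis (2) as ``check that the relevant obstruction vanishes.'' What Theorem \ref{Largelocal} requires is a stable isomorphism between the normal polarization $(N\M,N\Lo,N\Li)$ and the tangent polarization $(T\M,T\Lo,T\Li)$ (Definition \ref{tangentnormalisodef}). That is not the Seidel--Smith normal trivialization you describe. Pointing to Hendricks' branched-cover computation and to ``Large's relaxed hypotheses'' does not explain why this isomorphism exists for a free cover.

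The missing idea is Lemma \ref{zfoldcover}. Because $\overline{P}$ is class $1$, $H^1(\RP-\overline{P};\Z)\cong\Z$ surjects onto $H^1(\RP-\overline{P};\Z/2)\cong\Z/2$. Hence there is $\overline{f}\colon\RP-\overline{P}\to S^1$ such that $S^3-P$ is the pullback of $z\mapsto z^2$. In other words, the double cover of the knot complement is the mod $2$ reduction of a $\Z$-cover. Given $\overline{f}$ and its lift $f$, the proof of Large's Proposition 10.1 constructs the tangent--normal isomorphism explicitly.

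This is exactly where the knot, equivalently the punctures at the basepoints, enters. For the closed cover $S^3\to\RP$ one has $H^1(\RP;\Z)=0$, so no such $\overline{f}$ exists and Large's result does not apply. Any verification of (2) must therefore use the basepoints essentially, and your sketch never says where they enter.

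\textbf{The Alexander-grading argument also needs repair.}
\begin{itemize}
\item The spectral sequence is not built from counts of $\tau$-equivariant disks. After Large's equivariant exact isotopy, it is the spectral sequence of the double complex with differentials $\widetilde{\partial}$ and $\theta(1+\widetilde\tau_\#)$, followed by the localization isomorphism.
\item So you need $A(\tau_\# y)=A(y)$ for \emph{every} generator $y$, not only equivariant ones. The paper proves this in Proposition \ref{tauongradings}: compose a disk $\phi\in\pi_2(y,x)$, where $x$ is a fixed generator, with $\tau$, and use that $\tau$ permutes $\{w_1,w_2\}$ and $\{z_1,z_2\}$.
\item You also need the isotopy and the localization map to respect Alexander grading. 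This holds because no curves cross the basepoint divisors.
\item Pulling back domains from $\overline{\mathcal{H}}$ only compares lifts of generators with the same $\mathfrak{s}_w$. Generators in $\mathfrak{s}_0$ and $\mathfrak{s}_1$ are joined by no domain in $\RP$. This leaves $A(x)=2A(\overline{x})+c$ with one undetermined constant per $\textrm{Spin}^\textrm{c}$ structure, and checking the unknot does not determine these constants for general $P$.
\item You must actually carry out the absolute comparison you allude to. Proposition \ref{gradingupstairsdownstairs} does this using $c_1(\mathfrak{s}_{w,z}(x))+\mu_P^*=2A(x)\mu_P^*$ together with pullback under $\pi$.
\end{itemize}
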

 In the above theorem, $V$ denotes the bigraded vector space $\F_{(0,0)}\oplus \F_{(-1,-1)}$; that is the two dimensional vector space over $\mathbb{F}$ with homogeneously graded basis elements in bigradings $(d,s)=(0,0)$ and $(-1,-1)$. 
 \begin{remark}
    Using \cite[~Theorem 1.1]{Hendricks_2016}, it is straightforward to show that every page of the spectral sequence of Theorem \ref{mainthm} is an invariant of $(P,\tau)$ in the sense that given other choices in the construction (of Heegaard diagram, almost complex structure etc.), there is an isomorphism between each page of the resulting spectral sequences. See also Remark \cite[~4.13]{hendricks2022rank}.
\end{remark}
A \emph{$q$-periodic knot} $(K,\tau)$ is a symmetric knot where $\tau$ is orientation preserving and has a non-empty fixed set. Let $K$ be a $q$-periodic knot with quotient $\overline{K}$ for $q=p^r$ some prime power. Edmonds proved using minimal surface theory that there exists an equivariant minimal genus Seifert surface for $K$, which in the case of $q=2$ specializes to the fact there is an inequality \[g(K)\ge g(\overline K)+\frac{\lambda-1}{2}\] where $\lambda$ is the linking number of $K$ with the fixed axis \cite[~Theorem 4]{MR769284}. Work of Hendricks, along with knot Floer homology genus detection results \cite[~Theorem 1.2]{OSgenus}, recovers Edmonds' result \cite[~Corollary 1.6]{HendricksDP}. From Theorem \ref{mainthm}, we establish an analog of Edmonds' condition for freely 2-periodic knots. Firstly, we remind the reader that there is an appropriate notion of genera for rationally nullhomologous knots. If \(K\) is a rationally nullhomologous knot in a closed oriented \(3\)-manifold \(Y\), a
\(p\)-Seifert surface for \(K\) is a properly embedded oriented surface
\[
        F\subset Y\setminus \operatorname{int}(\nu K)
\]
whose boundary represents \(p\) times the longitude of \(K\), or equivalently
whose boundary wraps algebraically \(p\) times around \(K\). The rational genus
of \(K\), in the sense of Calegari--Gordon, is
\[
        \|K\|
        =
        \inf_F \frac{-\chi(F)}{2p},
\]
where the infimum is taken over all $p$ and all \(p\)-Seifert surfaces \(F\) for \(K\) \cite[~Definition 2.3]{MR3008914}.

\begin{corollary}\label{cor:rational-genus-bound}
Let \(g(P)\) denote the Seifert genus of
\(P\), and let \(\|\overline P\|\) denote the rational genus of \(\overline P\). Then 
\[
        2\|\overline P\|+\frac{1}{2}\le g(P).
\]
\end{corollary}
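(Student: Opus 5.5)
The plan is to combine Theorem \ref{mainthm} with genus detection in knot Floer homology, on both sides. For $P\subset S^3$, Ozsv\'ath--Szab\'o \cite[~Theorem 1.2]{OSgenus} gives $g(P)=\max\{A : \HFKhat(S^3,P,A)\neq 0\}$, and by symmetry $\HFKhat(S^3,P,A)=0$ for $|A|>g(P)$. For the rationally nullhomologous knot $\overline P\subset\RP$ I would use Ni's rational genus detection (or the version for rationally nullhomologous knots in Ni--Wu): the Alexander grading support of $\HFKhat(\RP,\overline P)$ determines the Thurston-norm/rational genus. With the $\frac12\Z\pm\frac14$ normalisation of Section \ref{gradings}, its width is $\max A-\min A = 2\|\overline P\|+\frac{1}{2}$, up to a constant to be pinned down. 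I would fix that constant by checking a simple class $1$ knot, namely the core $\RP^1\subset\RP$. Its quotient lift is an unknot in $S^3$, and its rational genus is $0$.

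First, I would use that the spectral sequence splits along Alexander gradings, sending grading $A$ of $P$ to grading $\frac{2A+1}{4}$. The $E^\infty$ page in grading $\frac{2A+1}{4}$ is a subquotient of the $E^1$ page in grading $A$. Hence $\HFKhat(\RP,\overline P,a)\otimes\F[\theta,\theta^{-1}]$ can be nonzero only if $a=\frac{2A+1}{4}$ for some $A$ with $\HFKhat(S^3,P,A)\neq 0$. The tensor factor $V$ must be handled here. I would check that $V$ carries its own Alexander shift ($0$ or $-1$), so that it enlarges the $E^1$ support to $[-g(P)-1,\,g(P)]$. Either this range or $[-g,g]$ is compatible with the formula $\frac{2A+1}{4}$; I would determine which from how $V$ arises, namely from doubling the basepoints in the lifted diagram. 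Every grading of $\HFKhat(\RP,\overline P)$ then lies in $\left[\frac{-2g(P)-1}{4},\frac{2g(P)+1}{4}\right]$ or the shifted analogue, an interval of width $g(P)$ or $g(P)+\frac12$.

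Second, I would compare widths. The genus detection result for $\overline P$ says the support has width exactly $2\|\overline P\|$ plus the normalising constant. Combining the two, the width of the support for $\overline P$ is at most the width of the interval above. Solving gives $2\|\overline P\|+\frac12\le g(P)$ once the constants are fixed.

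The main obstacle is the bookkeeping of constants. It involves three things:
\begin{itemize}
\item the precise form of rational genus detection, which states that the extremal Alexander grading equals $\|\overline P\|+\frac{1}{2}$ times the appropriate order-of-$[\overline P]$ factor, here $p=2$;
\item the Alexander grading of the second generator of $V$;
\item the affine map $A\mapsto\frac{2A+1}{4}$.
\end{itemize}
I would first settle these on the examples $P=$ unknot (where $\overline P$ is the core) and a torus-knot lift, and then run the width comparison in general.
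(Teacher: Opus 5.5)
The overall strategy is right: use the Alexander-graded splitting of the spectral sequence, plus genus detection on both sides. The gap is that the proposal defers exactly the numerical bookkeeping that constitutes this corollary. The values you do commit to are wrong, so the argument as written does not give the inequality.

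\emph{The width formula.} In the paper's normalization (Alexander gradings in $\frac12\mathbb{Z}\pm\frac14$), Ni's detection theorem reads $A_{\max}(\overline P)=\|\overline P\|+\frac12$, with no extra factor of $p=2$. Together with the symmetry $A\mapsto -A$ of $\HFKhat(\RP,\overline P)$, the support therefore has width $2\|\overline P\|+1$, not $2\|\overline P\|+\frac12$.

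\emph{The calibration example.} You assert that the core $\mathbb{RP}^1$ has rational genus $0$. With the paper's definition, the meridian disk of the complementary solid torus is a $2$-Seifert surface with $-\chi(F)/2p=-\frac14$. This is also the value forced by Ni's formula, since $A_{\max}=\frac14$ for the core. The value $0$ comes from discarding disk contributions (a $\chi_-$-type convention). With that value the corollary itself fails for the unknot, since it would read $\frac12\le 0$. Calibrating a single universal constant on the core is also unsound under that convention: the disk case and the case $\chi(F)\le 0$ obey different affine relations.

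\emph{How the two errors interact.} Calibrating with $\|\overline P\|=0$ gives the constant $\frac12$. Combine it with the correct $E^1$ support $[-g-1,g]$, whose image is $\left[-\frac{2g+1}{4},\frac{2g+1}{4}\right]$ of width $g+\frac12$. That yields only $2\|\overline P\|\le g$, which is weaker than claimed. You would land on the stated bound only by also using the wrong support $[-g,g]$, so that the two errors cancel.

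\emph{The fix, which is the paper's argument.} Only the top Alexander grading matters. Since $V$ only lowers Alexander grading, the top of the $E^1$ page is $g(P)$ by Ozsv\'ath--Szab\'o genus detection. The Alexander splitting then gives $A_{\max}(\overline P)\le\frac{2g(P)+1}{4}$. Substituting $A_{\max}(\overline P)=\|\overline P\|+\frac12$ gives $2\|\overline P\|+\frac12\le g(P)$. This route needs neither the symmetry of $\HFKhat(\RP,\overline P)$ nor the bottom of the $E^1$ support, which is where your uncertainty about $V$ arose. Once the constants are right, your width comparison is equivalent, because both supports are symmetric about $0$; it is just more fragile.
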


\begin{proof}

Let
\[
        A_{\max}(\overline P)
        =
        \max\left\{
        A\in \mathbb Q \mid
        \widehat{HFK}(\mathbb{RP}^3,\overline P,A)\neq 0
        \right\}
\]
and 
\[
        A_{\max}(P)
        =
        \max\left\{
        A\in \mathbb Z \mid
        \widehat{HFK}(S^3,P,A)\neq 0
        \right\}.
\]
By Ni's Thurston norm detection theorem for link Floer homology in rational
homology spheres \cite[~Theorem 3.1]{MR2546619}, we have
$\|\overline P\|=A_{\max}(\overline P)-\frac{1}{2},$ and by Ozsv\'{a}th-Szab\'{o}'s genus detection result \cite[~Theorem 1.2]{OSgenus} we have $g(P)=A_{\max}(P)$. By Theorem \ref{mainthm}, $\frac{2A_{\max}(P)+1}{4}\ge A_{\max}(\overline P).$ 
\end{proof}

Comparing the ranks of the $E^1$ and $E^\infty$ pages of the spectral sequence in Theorem \ref{mainthm} we also obtain the following corollaries.
\begin{corollary}
For every $A\in \Z$ there is a rank inequality\begin{equation}\label{rankinequality} rk(\HFKhat(S^3,P,A))+rk(\HFKhat(S^3,P, A+1))\ge rk\left(\HFKhat\left(\RP, \overline{P},\frac{2A+1}{4}\right)\right). \end{equation}
\end{corollary}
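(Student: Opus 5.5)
The inequality follows from comparing ranks on the $E^1$ and $E^\infty$ pages of the spectral sequence of Theorem \ref{mainthm}, one Alexander grading at a time. Since the spectral sequence splits along Alexander gradings, I fix $A\in\Z$ and restrict to the summand that Theorem \ref{mainthm} carries to Alexander grading $\frac{2A+1}{4}$ on the $E^\infty$ page. I then compute the $E^1$ page of this summand. The Alexander grading on $\HFKhat(S^3,P)\otimes V$ is additive, and $V=\F_{(0,0)}\oplus\F_{(-1,-1)}$ has generators in Alexander gradings $0$ and $-1$. So the part of $\HFKhat(S^3,P)\otimes V$ in total Alexander grading $A$ is
\[
\HFKhat(S^3,P,A)\otimes\F_{(0,0)}\;\oplus\;\HFKhat(S^3,P,A+1)\otimes\F_{(-1,-1)}.
\]
Its dimension over $\F$ is $rk(\HFKhat(S^3,P,A))+rk(\HFKhat(S^3,P,A+1))$.

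Next I pass to $\F[\theta,\theta^{-1}]$-modules. On this summand the $E^1$ page is the graded vector space above tensored with $\F[\theta,\theta^{-1}]$. This is a free module over $\F[\theta,\theta^{-1}]$, which is a field if we ignore gradings (or a graded field). Its rank over $\F[\theta,\theta^{-1}]$ is the sum of ranks above. The differentials of a spectral sequence of $\F[\theta,\theta^{-1}]$-modules (as in Seidel--Smith/Large) are $\theta$-equivariant. Each successive page is the homology of the previous one, so over a field its rank can only stay the same or drop. The sequence converges, and the $E^1$ page is finite rank in each Alexander grading, so it stabilizes after finitely many pages. Hence the rank of $E^\infty$ in this grading is at most the rank of $E^1$.

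Finally, by Theorem \ref{mainthm}, the $E^\infty$ page in Alexander grading $\frac{2A+1}{4}$ is $\HFKhat(\RP,\overline{P},\frac{2A+1}{4})\otimes\F[\theta,\theta^{-1}]$. Its rank over $\F[\theta,\theta^{-1}]$ is $rk(\HFKhat(\RP,\overline{P},\frac{2A+1}{4}))$, which gives \eqref{rankinequality}.

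The main point needing care is the claim that the Alexander splitting in Theorem \ref{mainthm} is compatible with the bigrading of $V$. Specifically, the summand of $E^1$ mapping to grading $\frac{2A+1}{4}$ must be exactly the total-Alexander-grading-$A$ part of $\HFKhat(S^3,P)\otimes V$, with $V$'s second generator contributing the shift $-1$. I would check this against the construction in the proof of Theorem \ref{mainthm}, where $V$ arises from the extra basepoint pair in the doubly-pointed diagram for $P$ (the lift of a diagram for $\overline{P}$). There the generator of $V$ in Alexander grading $-1$ pairs with $\HFKhat(S^3,P,A+1)$.
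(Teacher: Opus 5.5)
Your proposal is correct and follows the paper's argument, which simply compares ranks of the $E^1$ and $E^\infty$ pages of Theorem \ref{mainthm} one Alexander grading at a time; as you note, the total-grading-$A$ part of $\HFKhat(S^3,P)\otimes V$ is $\HFKhat(S^3,P,A)\oplus\HFKhat(S^3,P,A+1)$, and this compatibility is already built into the theorem's statement via the bigraded isomorphism $\widetilde{HFK}(\mathcal{H})\cong\HFKhat(S^3,P)\otimes V$. One small slip: $\F[\theta,\theta^{-1}]$ is not a field once gradings are ignored (for instance $1+\theta$ is not a unit), so justify the monotonicity of ranks either by treating it as a graded field, as you also suggest, or by measuring ranks over its fraction field, which is flat over it.
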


\begin{corollary}
For $i=0,1$ there is a rank inequality \begin{equation} rk(\HFKhat(S^3,P))\ge rk(\HFKhat(\mathbb{RP}^3,\overline{P};\mathfrak{s}_i)).\end{equation}
\end{corollary}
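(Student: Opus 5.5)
The plan is to deduce this from the previous corollary, i.e.\ from the Alexander-graded rank inequality \eqref{rankinequality}, by summing it over integers $A$ of a fixed parity. Summing all the $E^1$ and $E^\infty$ ranks in Theorem \ref{mainthm} at once only gives
\[
2\,rk(\HFKhat(S^3,P))\ \ge\ rk(\HFKhat(\RP,\overline P;\mathfrak s_0))+rk(\HFKhat(\RP,\overline P;\mathfrak s_1)),
\]
which is too weak. So the point is to separate the two $\textrm{Spin}^c$ structures using the Alexander grading.

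\textbf{Step 1 (main obstacle).} Show that, for a class $1$ knot $\overline P\subset\RP$, the $\textrm{Spin}^c$ structure is determined by the Alexander grading modulo $1$. Concretely, I expect that after relabeling $\mathfrak s_0,\mathfrak s_1$:
\begin{itemize}
\item summands in $\mathfrak s_1$ have $A\in\Z+\frac14$;
\item summands in $\mathfrak s_0$ have $A\in\Z-\frac14$.
\end{itemize}
Note that the two sets $\frac12\Z\pm\frac14$ written in the introduction coincide as sets, so they cannot be what separates the summands; the finer statement above is what is needed. To prove it, I would use the description of relative $\textrm{Spin}^c$ structures in Section \ref{gradings}. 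Relative $\textrm{Spin}^c$ structures on $\RP\setminus\nu\overline P$ form an affine copy of $H_1(\RP\setminus\nu\overline P)\cong\Z$, and the meridian $\mu$ is twice a generator because $\overline P$ is class $1$. The filling map to $\textrm{Spin}^c(\RP)$ quotients by $\mu$, that is, by $2\Z$, so it records the parity of the relative class. The Alexander grading is an affine function of the relative class whose step on a generator is $\frac12$, since $\langle c_1,\cdot\rangle$ is divided by $2[\mu]$. Hence parity of the relative class corresponds exactly to the class of $A$ modulo $1$. This is a bookkeeping check with the conventions of \cite{OSrational}, and it is the step I would verify most carefully.

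\textbf{Step 2.} Observe that $\frac{2A+1}{4}=\frac A2+\frac14$. For $A$ even this lies in $\Z+\frac14$, and for $A$ odd it lies in $\Z+\frac34=\Z-\frac14$. Moreover, as $A$ runs over the even integers, $\frac{2A+1}{4}$ runs bijectively over $\Z+\frac14$; as $A$ runs over the odd integers, it runs bijectively over $\Z-\frac14$. By Step 1, every nonzero summand of $\HFKhat(\RP,\overline P;\mathfrak s_1)$ therefore occurs as $\HFKhat(\RP,\overline P,\frac{2A+1}{4})$ for exactly one even $A$. Likewise every nonzero summand of $\HFKhat(\RP,\overline P;\mathfrak s_0)$ occurs for exactly one odd $A$.

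\textbf{Step 3.} Sum \eqref{rankinequality} over all even $A$; only finitely many terms are nonzero. The left-hand side becomes
\[
\sum_{k\in\Z}\Big(rk\,\HFKhat(S^3,P,2k)+rk\,\HFKhat(S^3,P,2k+1)\Big)=rk\,\HFKhat(S^3,P),
\]
because each integer is either $2k$ or $2k+1$ for exactly one $k$. By Step 2, the right-hand side is $rk\,\HFKhat(\RP,\overline P;\mathfrak s_1)$.

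Summing over odd $A=2k-1$ instead, the left-hand side is again the total rank $\sum_k\big(rk_{2k-1}+rk_{2k}\big)=rk\,\HFKhat(S^3,P)$. The right-hand side is $rk\,\HFKhat(\RP,\overline P;\mathfrak s_0)$.

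These two sums give the inequality for $i=1$ and $i=0$ respectively, which proves the corollary.
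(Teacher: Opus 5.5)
Your proposal is correct and is essentially the paper's proof. The paper also sums \eqref{rankinequality} over all $A$ of one parity, and uses Lemma \ref{gradingdiffs} (your Step 1, which you re-derive by the same relative $\textrm{Spin}^c$ bookkeeping) to identify $\bigoplus_{A}\HFKhat(\RP,\overline P,A+\frac14)$ and $\bigoplus_{A}\HFKhat(\RP,\overline P,A+\frac34)$ with the two $\textrm{Spin}^c$ summands. The only difference is the labeling: the paper assigns $\Z+\frac14$ to $\mathfrak s_0$ rather than $\mathfrak s_1$, which is immaterial since the inequality is claimed for both $i$.
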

\begin{proof}
    Take the sum of Equation \ref{rankinequality} over all $A$ of one parity. For a knot $\overline{P}\subset \RP$, according to Lemma \ref{gradingdiffs} we have that \[\bigoplus_{A\in\mathbb{Z}}\HFKhat(\RP,\overline{P},\frac{4A+1}{4})=\HFKhat(\RP,\overline{P};\mathfrak{s_0})\]
    and 
    \[\bigoplus_{A\in\Z}\HFKhat(\RP,\overline{P},\frac{4A+3}{4})=\HFKhat(\RP,\overline{P};\mathfrak{s_1}).\]
\end{proof}

Theorem \ref{mainthm} belongs to a growing body of work on spectral sequences in Heegaard Floer homology associated to involutions of knots and three-manifolds. In particular, previous work similar to Theorem \ref{mainthm} focuses on the better studied cousins of freely 2-periodic knots, the strongly invertible and doubly periodic knots -- knots which are preserved by an orientation-preserving order 2 action on $S^3$ with fixed set equal to an unknotted axis. Hendricks in \cite{HendricksDP} used a localization theorem in Lagrangian Floer homology due to Seidel--Smith's (\cite{SS}) to establish a rank inequality for the knot Floer homology of doubly periodic knots, a result which was refined by Hendricks--Lipshitz--Sarkar in \cite{Hendricks_2016} and Boyle in \cite{Boyle2022rankinequalitiesonknotfloerhomologyofperiodicknots}. The second author used a generalization of Seidel--Smith's localization theorem due to Large to establish similar results for strongly invertible knots \cite{parikh2024}. Moreover, Theorem \ref{mainthm} should be compared to Lidman--Manolescu's work \cite{LidmanManolescu} which establishes a rank inequality for the Floer homology of p-fold covers of rational homology spheres using the Seiberg--Witten Floer stable homotopy type. Theorem \ref{mainthm} should also be compared to  Large's work on the behavior of Heegaard Floer homology of three manifolds under double covers which lift to a $\Z$-fold cover \cite{Large} (which notably excludes the double cover $S^3 \rightarrow \mathbb{RP}^3$). Hendricks established a rank inequality in knot Floer homology for the branched double cover of a knot in $S^3,$ a result which was extended by Large to branched double covers of nullhomologous knots in arbitrary closed oriented three-manifolds \cite{Hendricks2012rank,Large}.  Hendricks--Lidman--Lipshitz also deduced similar rank inequalities in Heegaard Floer homology for branched double covers \cite{Hendricks2012rank, hendricks2022rank}. The main purpose of this paper is to fill a small gap in this literature; namely, establishing a rank inequality in knot Floer homology associated to the lift of a knot under an ordinary double cover. We focus our attention on the case of the double cover $S^3\to \RP$ but expect that our results can be generalized.

A freely 2-periodic knot $P$ differs from a strongly invertible or doubly periodic knot because $P$ does not admit a diagram where the symmetry is seen as rotation about one axis, but nevertheless it is possible to construct an equivariant Heegaard diagram for $P$ and a quotient Heegaard diagram for $\overline{P}$; see Subsection \ref{sympring} for the construction and Figures \ref{fig:10157heeg} and \ref{fig:quotheeg} for an example thereof. Equivariant Heegaard diagrams such as these are necessary to construct localization spectral sequences in Heegaard Floer and knot Floer homology from involutions on the three-manifolds or knots. The technical tool used to produce these localization spectral sequences is again Large's generalization of Seidel--Smith's localization theorem \cite{Large,SS}, which associates a spectral sequence in Lagrangian Floer homology to pairs of Lagrangians in a symplectic manifold equipped with an involution satisfying bundle theoretic conditions (see Definition \ref{tangentnormalisodef}). Most of the work in establishing Theorem \ref{mainthm} consists of constructing the equivariant Heegaard diagrams and verifying these restrictive hypotheses.

\subsection*{Organization} This paper is organized as follows. Section \ref{background} contains background material on freely 2-periodic knots and knot Floer homology. In Section \ref{diagrams} we construct equivariant Heegaard diagrams for freely 2-periodic knots and their quotients, and analyze gradings on the knot Floer complexes of the aforementioned. The existence of the spectral sequence of Theorem \ref{mainthm} is established in Section \ref{proofs} using Large's localization theorem. Lastly, in Section \ref{examples} some examples are given.

\subsection*{Acknowledgments}

We thank Kristen Hendricks for many helpful conversations, and in particular for communicating to us the proof of Lemma \ref{lem:equivariant_cancellation}. We also thank Keegan Boyle for comments on a draft of this work, and Jay Patwardhan for help with the code.

\section{Background}
\label{background}
\subsection{Freely 2-periodic knots and projective knots}\label{2periodic}
Let $P$ be a freely 2-periodic knot. A freely 2-periodic knot can be written as the closure of the braid $T\circ \textrm{flip}(T)$ where $T$ is a braid word and $\textrm{flip}(T)$ is the result of rotating $T$ 180 degrees about its middle horizontal axis \cite[~Section 6.2]{manolescu2024rasmusseninvariantlinksmathbbrp3}. Thus, $P$ admits a diagram as in Figure \ref{fig:10157} where the freely 2-periodic symmetry is seen as a 180 degree rotation about an axis transverse to the plane of the diagram followed by a 180 degree rotation about an axis in the plane of the diagram. We will call such a diagram a \emph{free diagram} for $P$. The quotient knot $\overline{P}\subset \RP$ admits a diagram in $\mathbb{RP}^2$ formed by identifying antipodes of the boundary of a disk containing a diagram for $T$. 

\subsection{Knot Floer homology}\label{link}
 We assume that the reader is familiar with knot Floer homology as in \cite{OSknots,RasmussenThesis}. We briefly recall this theory here; mainly to establish notational conventions, but also to recount how the Alexander gradings work for rationally nullhomologous knots as in \cite{OSrational}. 
 
 Given a knot $K$ in a closed oriented three-manifold $Y$, a $2n$-pointed \emph{Heegaard diagram} for $K$ is a quintuple \[\mathcal{H}=(\Sigma_g, \bm{\alpha}=\{\alpha_1,\hdots,\alpha_{g+n-1}\},\bm{\beta}=\{\beta_1,\hdots,\beta_{g+n-1}\},\bm{w}=\{w_1,\hdots,w_n\},\bm{z}=\{z_1,\hdots,z_n\})\]
such that 
\begin{enumerate}
    \item each of $\bm{\alpha}$ and $\bm{\beta}$ spans a $g$ dimensional subspace of $H_1(\Sigma)$,
    \item  $\alpha_i\cap \alpha_j=\beta_i\cap \beta_j=\emptyset$ for $i\neq j$,
    \item $\alpha_i\pitchfork\beta_j$ for all $i,j$,
    \item each component of $\Sigma-\cup \alpha_i$ and $\Sigma-\cup \beta_i$ contains exactly one $\bm{w}$ point and one $\bm{z}$ point,
    \item and the union of arcs $\xi_i$ connecting each $w_i$ to a $z_j$ on $\Sigma_g-\cup_{i=1}^{g+n-1}\alpha_i$ that are then slightly pushed into the handlebody specified by $\bm{\alpha}$ and arcs $\zeta_i$ connecting each $z_j$ to a $w_i$ on $\Sigma_g-\cup_{i=1}^{g+n-1}\beta_i$ that are then slightly pushed into the handlebody specified by $\bm{\beta}$ yields the knot $K$: that is, $\bigcup_i \xi_i\cup \zeta_i=K$.
\end{enumerate}

The connected components of $\Sigma_g-\cup \alpha_i-\cup \beta_i$ are called \emph{elementary regions}. A linear combination $D$ of elementary regions is called a \emph{periodic domain} if $\partial D$ can be expressed as a linear combination of the $\alpha$ and $\beta$ curves. The Heegaard diagram $\mathcal{H}$ is \emph{weakly admissible} if every periodic domain has both positive and negative local multiplicities. From now on we assume that $\mathcal{H}$ is weakly admissible, as this is a necessary assumption to compute knot Floer homology from $\mathcal{H}.$ We define the knot Floer chain complex by
\begin{equation}\label{linkfloereq}
\widetilde{CFK}(\mathcal{H}):=CF(\talpha,\tbeta)=\bigoplus_{x\in \talpha\cap \tbeta}\F[x]
\end{equation}
where $\talpha:=\alpha_1\times\hdots\times\alpha_{g+n-1}$ and $\tbeta:=\beta_1\times\hdots\times \beta_{g+n-1}$ are Lagrangians in the manifold $\Sym^{g+n-1}(\Sigma_g\backslash (\bm{w}\cup\bm{z}))$ with respect to a suitable symplectic form as in \cite{perutz}. The differential $\partial$ on $\widetilde{CFK}(\mathcal{H})$ is then the Lagrangian intersection Floer differential which counts pseudo-holomorphic disks in $\Sym^{g+n-1}(\Sigma_g\backslash (\bm{w}\cup\bm{z}))$ with boundary on $\talpha$ and $\tbeta$. The knot Floer homology of $\mathcal{H}$ is the homology of this chain complex: \[\widetilde{HFK}(\mathcal{H}):=H_*(\widetilde{CFK}(\mathcal{H})).\] 
The version of knot Floer homology introduced above depends on the number of basepoints; if we let $V:=\F_{(0,0)}\oplus \F_{(-1,-1)}$ be the two dimensional vector space with homogeneous basis elements in bigradings $(0,0)$ and $(-1,-1)$, $\mathcal{H}'$ be a $4$-pointed Heegaard diagram for $K$, and $\mathcal{H}$ be a $2$-pointed Heegaard diagram for $K$, then 
\begin{equation}\label{HFKdependencyonbp}
    \widetilde{HFK}(\mathcal{H}')\cong\widetilde{HFK}(\mathcal{H})\otimes V^{}.
\end{equation}
The isomorphism class of the homology $\widetilde{HFK}(\mathcal{H})$ is independent of the choice of $2$-pointed diagram $\mathcal{H}$ for $K$, and hence is a knot invariant denoted $\HFKhat(Y,K)$.
\subsubsection{Gradings for rationally nullhomologous knots}\label{gradings}
We now elaborate on the gradings for rationally nullhomologous knots in knot Floer homology, beginning with a discussion of $\textrm{Spin}^\textrm{c}$ and relative $\textrm{Spin}^\textrm{c}$ structures. 

A $\textrm{Spin}^\textrm{c}$ structure on a closed oriented three-manifold $Y$ is a homology class of nowhere vanishing vector fields, where we say that vector fields are homologous if they are homotopic on the complement of a ball in $Y$; we denote the set of these by $\textrm{Spin}^\textrm{c}(Y)$. The set $\textrm{Spin}^\textrm{c}(Y)$ is an affine $H^2(Y;\Z)$ torsor via the following construction. Pick a metric on $Y$, and notice that any non-vanishing vector field $v$ can be homotoped to the unit vector field $\frac{v}{|v|}$. Now also fix a trivialization of $TY$; then there is a bijective correspondence between unit vector fields $u$ and maps $f_u:Y\to S^2$. From elementary obstruction theory we have that if $[v]=[v']$ then $f_{\frac{v}{|v|}}^*=f_{\frac{v'}{|v'|}}^*$. If $\mu$ is a generator for $H^2(S^2;\Z)$, we define \[[v]-[w]\coloneq f_{\frac{v}{|v|}}^*(\mu)-f_{\frac{w}{|w|}}^*(\mu)\in H^2(Y;\Z).\] This definition does not depend on the choice of trivialization of $TY$. The fact that $\textrm{Spin}^\textrm{c}(Y)$ is an affine $H^2(Y;\Z)$ torsor explains why there is only one $\textrm{Spin}^\textrm{c}(Y)$ structure on $S^3$, and why there are two on $\RP$ as mentioned in the introduction.

Let $\nu(K)$ denote a tubular neighborhood of $K\subset Y$, and let $\nu^\circ(K)$ denote its interior. A relative $\textrm{Spin}^\textrm{c}$ structure on $Y\backslash \nu^\circ(K)$,  is a homology class of nowhere vanishing vector fields which restrict to the canonical isotopy class of translation invariant nowhere vanishing vector fields on the toroidal boundary $\partial \nu(K)$; we denote the set of these by $\underline{\textrm{Spin}^\textrm{c}(Y,K)}$. Any generator $x\in \talpha\cap \tbeta$ is assigned a relative $\textrm{Spin}^\textrm{c}$ structure $\mathfrak{s}_{w,z}(x)$ \cite[~Section 2.3]{OSknots}. The set $\underline{\textrm{Spin}^c(Y,K)}$ is an affine torsor for $H^2(Y,K;\Z)$ in the same way that $\textrm{Spin}^\textrm{c}(Y)$ is an affine torsor for $H^2(Y;\Z)$. If a relative $\textrm{Spin}^\textrm{c}$ structure $\mathfrak{s}$ is represented by a vector field $v$, then we define the first Chern class of $\mathfrak{s}$ by $c_1(\mathfrak{s})=[v]-[-v]\in H^2(Y,K;\Z)$. Let us note the formula \begin{equation}\label{chernadd}c_1(\mathfrak{s}+\beta)=c_1(\mathfrak{s})+2\beta\end{equation} for any $\mathfrak{s}\in \underline{\textrm{Spin}^c(Y,K)}$ and $\beta\in H^2(Y,K;\Z)$. There is also a filling map $G_{Y,K}:\underline{\textrm{Spin}^\textrm{c}(Y,K)}\to \textrm{Spin}^\textrm{c}(Y)$ which has the property that for each $a\in H^2(Y,K;\Z)$ and $\mathfrak{s}\in \underline{\textrm{Spin}^\textrm{c}(Y,K)}$, \begin{equation}\label{fillequi}G_{Y,K}(\mathfrak{s}+a)=G_{Y,K}(\mathfrak{s})+\iota^*(a)\end{equation} where $\iota^*:H^2(Y,K;\Z)\to H^2(Y;\Z)$ is induced by inclusion. We refer the reader to \cite[~Section 2.3]{OSknots} for the definition of the filling map.

 Both the Alexander grading and the $\textrm{Spin}^\textrm{c}$ grading in knot Floer homology arise from the relative $\textrm{Spin}^\textrm{c}$ grading $\mathfrak{s}_{w,z}(x)$ of a generator $x\in \talpha\cap \tbeta$. The $\textrm{Spin}^\textrm{c}$ grading of $x$ is $\mathfrak{s}_w(x)\coloneq G_{Y,K}(\mathfrak{s}_{w,z}(x)).$ Now we describe the Alexander grading. Let $\mu$ be a meridian for $K$. In \cite[~Section 4.4]{MR2546619}, a formula for the Alexander grading of $x\in \talpha\cap\tbeta$ is given by \begin{equation}\label{NiAlexander}A(x)=\frac{\langle c_1(\mathfrak{s}_{w,z}(x)),[F,\partial F]\rangle -[\mu]\cdot [\partial F]}{2[\mu]\cdot [\partial F]}\end{equation} where $F$ is any rational Seifert surface. Let $B_{Y,K}\subset H^2(Y,K;\Z)$ be those cohomology classes which are the first Chern class of a relative $\textrm{Spin}^\textrm{c}$ structure for which knot Floer homology has non-zero rank in the corresponding summand. In \cite[~Proposition 6.4]{MR3122174} it is stated that for $F$ a connected rational Seifert surface,\begin{equation}\label{minchi}\min_{\alpha\in B_{Y,K}}\langle \alpha,[F,\partial F]\rangle=\chi(F).\end{equation} 

 With these ingredients in place, we can prove that the Alexander gradings of a class $1$ knot in $\RP$ lie in $\frac{1}{2}\Z+\frac{1}{4}$.
\begin{proposition}\label{gradingRP3}
    Let $\overline{P}\subset \RP$ be a class $1$ knot. For $x$ a homogeneously graded element of $\HFKhat(\RP,\overline{P})$,  $A(x)\in\frac{1}{2}\Z+\frac{1}{4}$.
\end{proposition}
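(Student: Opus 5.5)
Our plan is to compute $A(x)$ directly from Ni's formula \eqref{NiAlexander}, using the homological algebra of the complement of a class $1$ knot in $\RP$.

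\textbf{Step 1: topology of the complement.} Let $X=\RP\setminus\nu^\circ(\overline{P})$. Since $[\overline{P}]$ generates $H_1(\RP;\Z)\cong\Z/2$, the long exact sequence and Alexander/Lefschetz duality give $H_1(X;\Z)\cong\Z$. The meridian $\mu$ represents twice a generator: $H_1(\RP)=H_1(X)/\langle\mu\rangle$ and this quotient is $\Z/2$. The smallest rational Seifert surface $F$ therefore has $[\mu]\cdot[\partial F]=2$, so $\partial F$ wraps twice around $\overline{P}$. We fix this $F$ and assume it is connected; any rational Seifert surface gives the same grading by Ni's formula, and a connected one can be obtained by tubing components together. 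Plugging into \eqref{NiAlexander} yields
\[
A(x)=\frac{\langle c_1(\mathfrak{s}_{w,z}(x)),[F,\partial F]\rangle-2}{4}.
\]

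\textbf{Step 2: parity of the Chern class pairing.} It therefore suffices to show that $\langle c_1(\mathfrak{s}),[F,\partial F]\rangle$ is odd for every relative $\textrm{Spin}^c$ structure $\mathfrak{s}$. Then $A(x)\in\frac{\text{odd}-2}{4}=\frac{2k-1}{4}=\frac{1}{2}(k-1)+\frac{1}{4}$, which lies in $\frac12\Z+\frac14$.

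By \eqref{chernadd}, changing $\mathfrak{s}$ by $\beta$ changes the pairing by $2\langle\beta,[F,\partial F]\rangle$, which is even. So the parity is independent of $\mathfrak{s}$, and we only need to compute it once. For this we use \eqref{minchi}. Knot Floer homology of $\overline{P}$ is nonzero, since its Euler characteristic is governed by the Turaev torsion and is nonvanishing in total rank. Hence some relative $\textrm{Spin}^c$ structure in $B_{\RP,\overline{P}}$ attains pairing $\chi(F)$, and so every pairing is congruent to $\chi(F)$ mod $2$.

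\textbf{Step 3: parity of $\chi(F)$.} We claim $\chi(F)$ is odd. Since $F$ is a connected orientable surface with $\chi(F)=2-2g-b$, this amounts to showing $F$ has an odd number $b$ of boundary components. The boundary $\partial F$ is a collection of parallel curves on the torus $\partial\nu(\overline{P})$, all of the same primitive slope $\lambda$ with total multiplicity $2$. Each curve meets $\mu$ once. If there were two curves of slope $\lambda$, then each would separately be a rational longitude satisfying $[\mu]\cdot[\partial]=1$, contradicting Step 1. Hence the boundary is a single curve of class $2\lambda$ with $\lambda$ primitive. Such a curve is not a simple closed curve unless $b$ is counted differently, so we will need to handle this case with care.

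The main obstacle is exactly this boundary-component parity. To settle it, we will first reduce $F$ by choosing it to be a minimal $2$-Seifert surface, so that $\partial F$ is a single embedded curve representing the primitive rational longitude and the surface wraps twice. Alternatively, we will bypass the topology entirely by citing that $\langle c_1(\mathfrak{s}),[F,\partial F]\rangle\equiv\langle w_2\text{-type class},[F]\rangle$ plus boundary correction, computing it in an explicit example such as the core $\RP^1\subset\RP$ (Alexander gradings $\pm\frac14$), and then using the $\mathfrak{s}$-independence from Step 2 together with the fact that the parity is a homological invariant of the class-$1$ complement.
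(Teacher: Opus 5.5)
Your Steps 1 and 2 are essentially the paper's argument. With $[\mu]\cdot[\partial F]=2$, the numerator of \eqref{NiAlexander} is $\chi(F)+2\langle\beta,[F,\partial F]\rangle-2$, so everything reduces to showing that $\chi(F)$ is odd. The paper gets this in one line by taking $F$ connected with a single boundary circle, so that $\chi(F)=1-2g(F)$. Your Step 3 is meant to supply exactly this, but it contains false claims and ends unresolved. As written, the proof has a gap at its only nontrivial point.

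\textbf{The error in Step 3.} You conflate ``$\partial F$ wraps twice around $\overline{P}$'' with ``$[\partial F]$ is twice a primitive class.''
\begin{itemize}
\item Let $X=\RP\setminus\nu^\circ(\overline{P})$, and let $\lambda\in H_1(\partial X)$ be the primitive class spanning the kernel of $H_1(\partial X;\Q)\to H_1(X;\Q)$.
\item Because $H_1(X;\Z)\cong\Z$ is torsion-free, $\lambda$ is integrally nullhomologous in $X$.
\item If $\mu\cdot\lambda=\pm1$, then $\lambda$ would be a longitude isotopic to $\overline{P}$ inside $\nu(\overline{P})$. That forces $[\overline{P}]=0$ in $H_1(\RP)$, contradicting class $1$.
\item Hence $\mu\cdot\lambda=\pm2$ and $[\partial F]=\pm\lambda$: a single embedded curve of the primitive slope already wraps twice.
\end{itemize}
So ``each curve meets $\mu$ once'' is wrong, and so is ``$\partial F$ is a single curve of class $2\lambda$.'' As you noticed, the latter is not even realizable by an embedded curve. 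Your two-curve argument also ignores oppositely oriented parallel copies.

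Your second fallback computes on the core $\mathbb{RP}^1$ and transports the parity. That presupposes exactly what must be proved, namely that the parity does not depend on the class-$1$ knot.

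\textbf{The missing step is short.} You may take $F$ with $\partial F$ a single copy of $\lambda$, as the paper does. More generally, if all boundary circles of $F$ are essential, they are parallel copies of $\lambda$ whose signs sum to $\pm1$, so there are an odd number of them. Then $\chi(F)=\sum_i(2-2g_i-b_i)\equiv\sum_i b_i\equiv 1 \pmod 2$, and you do not even need connectedness.
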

\begin{proof}
 Pick a connected rational Seifert surface $F$ with $[\mu]\cdot [\partial F]=2$. Say that $c_1(\mathfrak{s})=\alpha\in B_{\RP,\overline{P}}$ achieves the minimum in Equation \ref{minchi}. Since any relative $\textrm{Spin}^\textrm{c}$ structure can be written as $\mathfrak{s}+\beta$ for some $\beta\in H^2(\RP,\overline{P};\Z)$, by Equation \ref{chernadd} we see that the numerator of Equation \ref{NiAlexander} for $\overline{P}\subset \RP$ is \[\langle c_1(\mathfrak{s}_{w,z}(x)),[F,\partial F]\rangle-2=\langle c_1(\mathfrak{s})+2\beta,[F,\partial F]\rangle-2=\chi(F)+2\langle \beta,[F,\partial F]\rangle -2.\]
This is an odd number since $\chi(F)=1-2g(F)$ because $F$ is a connected oriented surface with one boundary component.
\end{proof}
We also have the following lemma.
\begin{lemma}\label{gradingdiffs}
  Let $\overline{P}\subset \RP$ be a class $1$ knot. For $x,y$ homogeneously graded elements of $\HFKhat(\RP,\overline{P})$ if $\mathfrak{s}_w(x)=\mathfrak{s}_w(y)$ then $A(x)-A(y)\in \Z$, and if $\mathfrak{s}_w(x)\neq\mathfrak{s}_w(y)$ then $A(x)-A(y)\in \Z+\frac{1}{2}$.
\end{lemma}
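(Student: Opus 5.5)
The plan is to compare the relative $\textrm{Spin}^\textrm{c}$ structures of $x$ and $y$ directly, using the torsor structure together with Equations \ref{chernadd}, \ref{fillequi} and \ref{NiAlexander}. Write $\mathfrak{s}_{w,z}(y)=\mathfrak{s}_{w,z}(x)+\beta$ for a unique $\beta\in H^2(\RP,\overline{P};\Z)$. Fix a connected rational Seifert surface $F$ with $[\mu]\cdot[\partial F]=2$, as in the proof of Proposition \ref{gradingRP3}. By Equations \ref{NiAlexander} and \ref{chernadd},
\[A(y)-A(x)=\frac{\langle 2\beta,[F,\partial F]\rangle}{4}=\frac{\langle \beta,[F,\partial F]\rangle}{2}.\]
By Equation \ref{fillequi}, $\mathfrak{s}_w(y)=\mathfrak{s}_w(x)+\iota^*(\beta)$. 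Hence it suffices to show that $\langle\beta,[F,\partial F]\rangle$ is even exactly when $\iota^*\beta=0\in H^2(\RP;\Z)\cong\Z/2$.

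Next I compute the relevant groups. Let $X=\RP\setminus\nu^\circ(\overline{P})$. Excision and Lefschetz duality give $H^2(\RP,\overline{P})\cong H^2(X,\partial X)\cong H_1(X)$. Since $\overline{P}$ is class $1$, the long exact sequence shows that $H_1(X)\cong\Z$, generated by a class $m_0$ with $[\mu]=2m_0$. Concretely, $\mu$ meets $F$ twice, and $H_1(X)$ is torsion-free of rank one because $H_1(\RP)=\Z/2$ is generated by the knot. The pairing $\langle\beta,[F,\partial F]\rangle$ corresponds to the intersection number of the dual class with $F$. Since $\mu\cdot F=2$, the generator pairs to $\pm1$ with $[F]$. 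So if $\beta=k\cdot\mathrm{gen}$, then $\langle\beta,[F,\partial F]\rangle=\pm k$.

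The remaining step, which I expect to be the main obstacle, is to identify $\iota^*\colon H^2(\RP,\overline{P})\to H^2(\RP)$ as reduction mod $2$, i.e.\ surjective with kernel $2\Z$. I would use the exact sequence
\[H^1(\overline{P})\to H^2(\RP,\overline{P})\to H^2(\RP)\to H^2(\overline{P})=0.\]
This makes $\iota^*$ surjective onto $\Z/2$. Its kernel is the image of $H^1(\overline{P})\cong\Z$, and under the duality above that image corresponds to the meridian class $[\mu]=2m_0$. So the kernel is $2\Z$, and $\iota^*\beta=0$ iff $k$ is even. Care is needed to check that the coboundary of the generator of $H^1(\overline{P})$ really is dual to the meridian. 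One can see this geometrically: the coboundary is Poincar\'e dual to a meridian disk.

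Combining the three steps: if $\mathfrak{s}_w(x)=\mathfrak{s}_w(y)$, then $k$ is even and $A(x)-A(y)=\pm k/2\in\Z$. Otherwise $k$ is odd and $A(x)-A(y)\in\Z+\frac12$.
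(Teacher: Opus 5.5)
Your proposal is correct and follows essentially the same route as the paper: the two relative $\textrm{Spin}^\textrm{c}$ structures differ by a class in $H^2(\RP,\overline{P};\Z)\cong\Z$, Equations \ref{chernadd} and \ref{NiAlexander} convert this class into the Alexander difference, and Equation \ref{fillequi}, with $\iota^*$ being reduction mod $2$, ties the parity of that class to $\mathfrak{s}_w$. You also justify two facts the paper uses without proof, namely that the generator pairs to $\pm 1$ with $[F,\partial F]$ and that $\iota^*$ is reduction mod $2$; the latter is easier than you feared, since $\iota^*$ is surjective (as $H^2(\overline{P})=0$) and any surjection $\Z\to\Z/2$ has kernel $2\Z$, so you do not need to identify the coboundary with the meridian.
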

\begin{proof}
By Equation \ref{fillequi} along with the fact that $\iota^*\colon H^2(\RP,\overline{P};\Z)\cong \Z\to H^2(\RP;\Z)=\Z/2\Z$ is reduction mod $2$, we see that $\mathfrak{s}_{w,z}(x)=\mathfrak{s}_{w,z}(y)+2k\gamma$ for $\gamma$ a generator of  $H^2(\RP,K;\Z)$ and $k$ some integer. Using Equation \ref{chernadd} shows
\[\langle c_1(\mathfrak{s}_{w,z}(x)),[F,\partial F]\rangle =\langle c_1(\mathfrak{s}_{w,z}(y)+2k\gamma)),[F,\partial F]\rangle=\langle c_1(\mathfrak{s}_{w,z}(y)),[F,\partial F]\rangle +4k\langle \gamma,[F,\partial F]\rangle.\] Plugging this into Equation \ref{NiAlexander}, we conclude $A(x)-A(y)\in \Z$. Similarly if $x,y$ are generators such that $\mathfrak{s}_w(x)\neq \mathfrak{s}_w(y)$, we conclude that $\mathfrak{s}_{w,z}(x)=\mathfrak{s}_{w,z}(y)+(2k+1)\gamma$ and hence $A(x)-A(y)\in \Z+\frac{1}{2}$.
\end{proof}

\section{Equivariant Heegaard diagrams for freely 2-periodic knots}
\label{diagrams}
Here we construct an equivariant Heegaard diagram $\mathcal{H}$ for $P$ and a quotient diagram $\overline{\mathcal{H}}$ for $\overline{P}$. In addition, we analyze the effect of $\tau$ on the Alexander and Maslov gradings of $\HFKhat(S^3,P)\otimes V$.

\subsection{4-pointed diagrams for freely 2-periodic knots}\label{sympring}
In \cite[~Proposition 12.1]{OSsurvey} a procedure is described for constructing the so-called ``pringle chip'' Heegaard diagram associated to a knot diagram. We slightly modify this construction to obtain a 4-pointed equivariant Heegaard diagram \[\mathcal{H}=(\Sigma, \bm{\alpha},\bm{\beta},\bm{w}=\{w_1, w_2\}, \bm{z}=\{z_1,z_2\})\] for $P$. Start with a free diagram $D$ for $P$. Let $C$ be the immersed curve obtained by forgetting about crossing data in $D$. Let $\Sigma$ be the boundary of a $\tau$-equivariant tubular neighborhood about $C$. Add the connected components of $(\R^2-C)\cap \Sigma$ as $\alpha$ curves, and for each crossing in $D$ add a $\beta$ curve in accordance with Figure \ref{fig:pringle}.
\begin{figure}
    \centering
    \includegraphics[width=0.4\linewidth]{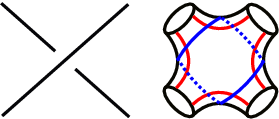}
    \caption{The placement of $\beta$ curves near crossings}
    \label{fig:pringle}
\end{figure} Add two meridians of $P$ that are interchanged by $\tau$ as $\beta$ curves, along with a pair of $w$ and $z$ basepoints on either side of each meridian that are also exchanged by $\tau$. This construction only differs from that of \cite[~Proposition 12.1]{OSsurvey} in that we include an $\alpha$ curve corresponding to the unbounded region of $\R^2-C$ in $\bm{\alpha}$, and that there are two basepointed meridians exchanged by $\tau$ instead of one. An example of a Heegaard diagram constructed in this manner starting from the diagram of $10_{157}$ on the left of Figure \ref{fig:10157} is displayed in Figure \ref{fig:10157heeg}.\begin{figure}[H]
\centering
  \includegraphics[width=.4\linewidth]{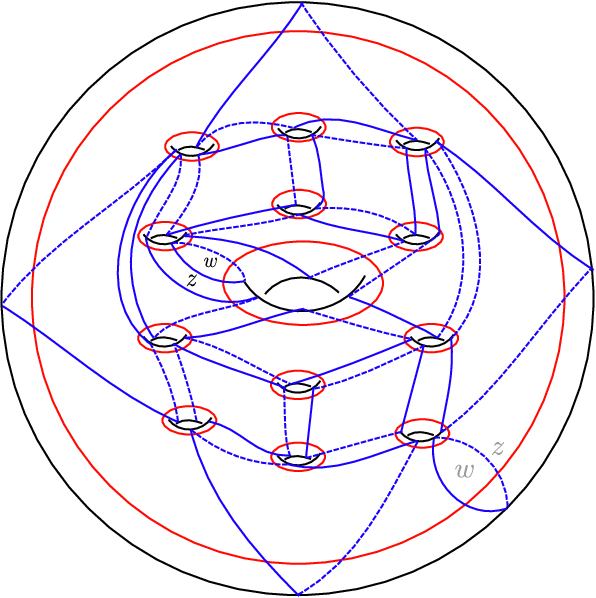}
    \caption{A 4-pointed equivariant Heegaard diagram constructed from the free diagram (Figure \ref{fig:10157}) of $10_{157}$. The $w$ and $z$ on the bottom right of the diagram are grey to indicate that they are on the bottom of the surface.}
    \label{fig:10157heeg}
\end{figure}
The quotient \[\overline{\mathcal{H}}=\mathcal{H}/\tau=(\overline{\Sigma}=\Sigma/\tau, \overline{\bm{\alpha}}=\bm{\alpha}/\tau, \overline{\bm{\beta}}=\bm{\beta}/\tau, \overline{w}=\{w_1,w_2\}/\tau,z=\{z_1,z_2\}/\tau)\]is a doubly pointed Heegaard diagram for $\overline{P}$, namely the one constructed by taking the quotient diagram in $\mathbb{RP}^2$ for $\overline{P}$ and applying the ``pringle chip'' procedure to the crossings within the disk, and identifying the boundary components of the resulting surface via a Dehn twist. See Figure \ref{fig:quotheeg} for an example of a Heegaard diagram for the quotient knot $\overline{P}$. 

\begin{figure}[H]
\centering
  \includegraphics[width=.3\linewidth]{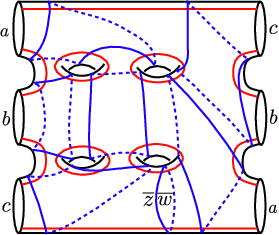}
    \caption{A two-pointed Heegaard diagram for the quotient knot $\overline{P}\subset \mathbb{RP}^3$. The boundary components with the same labels are identified via the antipodal map.}
    \label{fig:quotheeg}
\end{figure}

\subsection{The action on gradings}
The equivariant Heegaard diagram $\mathcal{H}$ of Section \ref{sympring} is invariant under $\tau\colon S^3\to S^3$, and hence there is an induced set map on the knot Floer complexes $\tau_{\#}\colon \widetilde{CFK}(\mathcal{H})\to \widetilde{CFK}(\mathcal{H})$. It should be noted that in general, the map $\tau_{\#}$ need not be a chain map. 
\begin{proposition}\label{tauongradings}
    The induced map $\tau_{\#}$ preserves Alexander and Maslov gradings.
\end{proposition}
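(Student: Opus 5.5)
Since $\tau$ maps intersection points of $\talpha\cap\tbeta$ to intersection points, and $\widetilde{CFK}(\mathcal{H})$ has relative gradings computed from domains, I will argue with relative gradings and then fix the absolute level. For generators $x,y$, choose a domain $D\in\pi_2(x,y)$; such domains exist because $H_1(S^3)=0$. Then
\[
M(x)-M(y)=\mu(D)-2n_{\bm{w}}(D),\qquad A(x)-A(y)=n_{\bm{z}}(D)-n_{\bm{w}}(D).
\]
Applying $\tau$ gives a domain $\tau(D)\in\pi_2(\tau x,\tau y)$. The map $\tau$ is an orientation-preserving diffeomorphism of $\Sigma$ that preserves $\bm{\alpha}$, $\bm{\beta}$, $\bm{w}$ and $\bm{z}$ setwise: it swaps $w_1\leftrightarrow w_2$ and $z_1\leftrightarrow z_2$. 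Consequently $n_{\bm{w}}(\tau D)=n_{\bm{w}}(D)$ and $n_{\bm{z}}(\tau D)=n_{\bm{z}}(D)$. Lipshitz's formula expresses the Maslov index combinatorially through the Euler measure and point measures, and these are invariant under an orientation-preserving diffeomorphism of the diagram, so $\mu(\tau D)=\mu(D)$. Hence $\tau_\#$ preserves relative Maslov and Alexander gradings:
\[
M(\tau x)-M(\tau y)=M(x)-M(y),\qquad A(\tau x)-A(\tau y)=A(x)-A(y).
\]

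It follows that $M(\tau x)-M(x)=c_M$ and $A(\tau x)-A(x)=c_A$ are constants independent of $x$. Since $\tau$ is an involution, applying the relation twice gives $2c_M=0$ and $2c_A=0$, so $c_M=c_A=0$.

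This involution argument is the clean route. An alternative is to compare the absolute gradings directly. The homology, namely $\HFKhat(P)\otimes V$, determines the absolute gradings through its symmetry and the normalization $\widehat{HF}(S^3)$. The map $\tau$ induces a grading-shifting bijection of generators, but it is not necessarily a chain map, so this route needs more care; the involution argument avoids it entirely.

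The main point needing care is the Maslov index invariance $\mu(\tau D)=\mu(D)$. I would justify it either by Lipshitz's combinatorial formula $\mu(D)=e(D)+n_x(D)+n_y(D)$, all of whose terms are preserved by an orientation-preserving diffeomorphism carrying $x,y$ to $\tau x,\tau y$, or by noting that $\tau$ induces a symplectomorphism of $\Sym^{g+n-1}(\Sigma)$ preserving $\talpha$ and $\tbeta$. A second point is to check that $\tau$ preserves the roles of $\bm{w}$ versus $\bm{z}$, rather than exchanging a $w$ with a $z$. This holds by the construction in Subsection \ref{sympring}, where the two basepointed meridians are swapped with $w$ going to $w$ and $z$ going to $z$, because $\tau$ preserves the orientation of $P$.
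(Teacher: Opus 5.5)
Your proof is correct. Its core computation matches the paper's: $\tau$ carries a domain in $\pi_2(x,y)$ to one in $\pi_2(\tau x,\tau y)$, Lipshitz's formula gives $\mu(\tau D)=\mu(D)$, and $\tau$ permutes $\bm{w}$ and $\bm{z}$ separately.

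The only real difference is how you pass from relative to absolute gradings.

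\begin{itemize}
\item The paper anchors at a $\tau$-fixed generator $x=\pi^{-1}(\overline{x})$, the lift of a generator of the quotient diagram. It then compares $y$ and $\tau_{\#}y$ to $x$ using $\phi$ and $\tau\circ\phi$, which gives the result in one step.
\item You instead show that $M(\tau x)-M(x)$ and $A(\tau x)-A(x)$ are constants, and kill them using $\tau^2=\mathrm{id}$.
\end{itemize}

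Your route does not need an invariant generator or the quotient diagram, and it works verbatim for any finite-order symmetry. The paper's route is shorter because such a generator is available for free here.

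You also make explicit a point the paper leaves implicit: $\tau$ sends $w$'s to $w$'s. The reason is that a free involution of $P\cong S^1$ is a rotation, hence orientation-preserving on $P$.
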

This argument is essentially given by Levine in \cite[~Section 3.3]{MR2443111}.
\begin{proof}
  
    Pick a generator $y\in \widetilde{CFK}(\mathcal{H})$, and let $x\in \widetilde{CFK}(\mathcal{H})$ be a generator fixed by $\tau$. Letting $\pi\colon \mathcal{H}\to \overline{\mathcal{H}}$ be the restriction of the double cover $S^3\to \RP$, one can construct such a $\tau_{\#}$ equivariant generator by taking the lift $x=\pi^{-1}(\overline{x})$ of any $\overline{x}\in \widetilde{CFK}(\overline{\mathcal{H}})$. Choose a Whitney disk $\phi\in \pi_2(y,x)$. The composition $\tau\circ \phi$ is then a Whitney disk in $\pi_2(\tau_{\#}y, x)$. The Maslov indices $\mu(\phi)$ and $\mu(\tau\circ \phi)$ clearly agree by Lipshitz's combinatorial Maslov index formula \cite[~Corollary 4.10]{Lipshitzcyl}. Since $\tau$ fixes the $\bm{w}$ and $\bm{z}$, $\sum_{z\in \bm{z}} n_z(\phi)=\sum_{z\in \bm{z}}n_z(\tau\circ\phi)$ and $\sum_{w\in \bm{w}} n_w(\phi)=\sum_{w\in \bm{w}}n_w(\tau\circ\phi)$. From the formulas \cite{OSknots} for Alexander and Maslov grading differences we see then that
    \[M(x)-M(y)=M(x)-M(\tau_{\#}y)\]
    and 
    \[A(x)-A(y)=A(x)-A(\tau_{\#}y).\]
 \end{proof}

 \begin{proposition}\label{gradingupstairsdownstairs}
     If $\overline{x} \in \widetilde{CFK}(\overline{\mathcal{H}})$ is a generator with lift $x \in \widetilde{CFK}(\widetilde{\mathcal{H}})$ and $A(\overline{x}) = k  + \frac{1}{2}$, then $A(x) = 2k + \frac{1}{2}$. 
 \end{proposition}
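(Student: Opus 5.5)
The plan is to compare Ni's formula, Equation \ref{NiAlexander}, for $\overline{x}$ in $\overline{\mathcal{H}}$ with the corresponding formula for $x=\pi^{-1}(\overline{x})$ in the $4$-pointed diagram $\mathcal{H}$. We pull everything back along the double cover $\pi\colon S^3\to\RP$. Two quantities need to be related: the relative $\textrm{Spin}^\textrm{c}$ structures of the generators, and the rational Seifert surfaces used to evaluate them.

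\emph{Step 1: relative $\textrm{Spin}^\textrm{c}$ structures.} The vector field representing $\mathfrak{s}_{w,z}(\overline{x})$ is built locally from the Heegaard diagram via the flows and the points of $\overline{x}$ (\cite[Section 2.3]{OSknots}). Since $\mathcal{H}=\pi^{-1}(\overline{\mathcal{H}})$ and $x=\pi^{-1}(\overline{x})$, and each basepoint of $\overline{\mathcal{H}}$ lifts to one $w$ and one $z$ point of $\mathcal{H}$, the lift of that vector field represents $\mathfrak{s}_{w,z}(x)$, viewed in the $4$-pointed (i.e.\ $\widetilde{CFK}$) normalization. Hence
\[c_1(\mathfrak{s}_{w,z}(x))=\pi^*c_1(\mathfrak{s}_{w,z}(\overline{x}))\in H^2(S^3,P;\Z).\]
The $4$-pointed normalization is what produces the $V$ shift of Equation \ref{HFKdependencyonbp}. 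I must check that Ni's formula, applied with both $w$ points and both $z$ points, is the correct normalization for $\widetilde{CFK}(\mathcal{H})$, and that it gives the half-integral values matching the factor $V$.

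\emph{Step 2: Seifert surfaces and meridians.} Take a connected rational Seifert surface $\overline{F}$ for $\overline{P}$ with $[\overline{\mu}]\cdot[\partial\overline{F}]=2$, as in the proof of Proposition \ref{gradingRP3}. Its preimage $F=\pi^{-1}(\overline{F})$ is a surface in the exterior of $P$. The cover restricted to $\partial\nu(\overline{P})$ is the connected double cover that unwraps the longitude direction, so a meridian $\overline{\mu}$ lifts to two meridians of $P$ while $\partial\overline{F}$ lifts to curves isotopic to longitudes of $P$. From this we compute $[\mu]\cdot[\partial F]$; I expect it to equal $1$ or $2$ after orienting carefully, and either value will do as long as it is computed correctly. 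By naturality,
\[\langle \pi^*c,[F,\partial F]\rangle=\langle c,\pi_*[F,\partial F]\rangle=2\langle c,[\overline{F},\partial\overline{F}]\rangle.\]

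\emph{Step 3: combine.} Substitute the quantities from Steps 1 and 2 into Equation \ref{NiAlexander} for both $x$ and $\overline{x}$, and solve for $A(x)$ in terms of $A(\overline{x})$. This should give an affine relation of the form $A(x)=2A(\overline{x})+c$ with an explicit constant $c$. The claim is that $c=-\frac{1}{2}$, since $2(k+\frac{1}{2})-\frac{1}{2}=2k+\frac{1}{2}$.

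As a consistency check, the slope $2$ can be confirmed independently. For a domain $\overline{\phi}\in\pi_2(\overline{x},\overline{y})$, the lift $\pi^{-1}(\overline{\phi})$ has total multiplicities $\sum n_z$ and $\sum n_w$ equal to twice those of $\overline{\phi}$. The relative grading formula of \cite{OSknots} then gives $A(x)-A(y)=2(A(\overline{x})-A(\overline{y}))$ whenever $\mathfrak{s}_w(\overline{x})=\mathfrak{s}_w(\overline{y})$.

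\emph{Main obstacle.} The hardest part will be the constant $c$. It depends on the $4$-pointed normalization (the $-\frac{1}{2}$ coming from the $V$ factor) and on the orientation and meridian bookkeeping in Step 2, both of which are easy to get off by a sign or a factor of $2$. To pin it down, I would first try the symmetric case: the unknot viewed as freely 2-periodic, whose quotient is the core of $\RP$ with $A=\pm\frac{1}{4}$. I would check the formula there and then carry that normalization through the general argument. If the Chern class pairing becomes messy, I would fall back on the relative-grading argument for the slope and fix $c$ with this example together with the symmetry of both theories under conjugation.
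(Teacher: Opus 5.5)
Your route is the paper's: pull $c_1$ of the relative $\textrm{Spin}^c$ structure back along $\pi$, then compare the two levels through the relation between $c_1$ and the Alexander grading. Steps 1 and 2 are sound, and Step 2 can be settled. $[\partial\overline F]$ is the rational longitude. Each of its lifts meets $\mu_P$ once and is null-homologous in $S^3-P$, because $\pi_*$ is injective on $H_1$ of the complements. Hence $[\mu_P]\cdot[\partial F]=2$ and $\pi_*[F,\partial F]=2[\overline F,\partial\overline F]$.

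\textbf{The gap is Step 3.} The slope $2$ is the easy part; the content of the proposition is the constant $c$, and you never determine it. Worse, carrying out Step 3 literally with Equation \ref{NiAlexander} on both levels gives $\langle c_1(\mathfrak s_{w,z}(\overline x)),[\overline F,\partial\overline F]\rangle=4A(\overline x)+2$. Hence $\langle c_1(\mathfrak s_{\bm w,\bm z}(x)),[F,\partial F]\rangle=8A(\overline x)+4$ and $A(x)=2A(\overline x)+\frac12$. This is off by one from the claim: for the unknot it would put the lifts in Alexander gradings $1$ and $0$ instead of $0$ and $-1$.

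The missing ingredient is exactly what you flagged and left open. You need to know how $c_1(\mathfrak s_{\bm w,\bm z}(x))$ for the $4$-pointed diagram, together with the sign convention in the $c_1$--$A$ relation, matches the Alexander grading normalized so that $\widetilde{HFK}(\mathcal H)\cong\HFKhat(S^3,P)\otimes V$ with $V=\F_{(0,0)}\oplus\F_{(-1,-1)}$. The paper settles this by imposing the single relation $c_1(\mathfrak s_{w,z}(x))+\mu^*=2A(x)\,\mu^*$ on both levels. Writing $c_1(\mathfrak s_{w,z}(\overline x))=e\,\mu^*_{\overline P}$ and using $\pi^*\mu^*_{\overline P}=2\mu^*_P$ with your Step 1, it gets $2A(x)-1=2e=2(2A(\overline x)-1)$.

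Your expectation of half-integral values upstairs is also inconsistent with the target. By Proposition \ref{gradingRP3}, $A(\overline x)\in\frac12\Z+\frac14$, so $k$ is not an integer and $A(x)=2A(\overline x)-\frac12$ is an integer, as $V$ requires. A symmetrized $V$ in gradings $\pm\frac12$ would give $c=0$ instead.

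\textbf{How to close it.} Your primary remedy can work, but only in this form. First show that Steps 1--3 yield $A(x)=2A(\overline x)+c$ with $c$ independent of the knot; it depends only on $[\overline\mu]\cdot[\partial\overline F]=2=[\mu_P]\cdot[\partial F]$ and the fixed $4$-pointed normalization. Then calibrate on the unknot, where the lifts have $A=0,-1$ and the quotient generators have $A=\frac14,-\frac14$. Since the affine map is increasing, this forces $c=-\frac12$.

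Your fallback does not suffice. The relative-grading argument only compares $\overline x,\overline y$ with $\mathfrak s_w(\overline x)=\mathfrak s_w(\overline y)$, because there are no Whitney disks downstairs between the two $\textrm{Spin}^c$ structures of $\RP$. So it produces one constant per $\textrm{Spin}^c$ structure, each a priori depending on the knot. A single example cannot fix these for all knots. Conjugation symmetry, which exchanges $\mathfrak s_0$ and $\mathfrak s_1$ for a class $1$ knot, constrains homology rather than individual generators, so it does not by itself determine them either.
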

 \begin{proof}
     This is analogous to the discussion in Section 3.2 of \cite{HendricksDP}.  Let $[\mu_{\overline{P}}^*]$ denote the Poincar\'{e} dual of the generator in $H_1(\mathbb{RP}^3 - \overline{P}; \mathbb{Q})$ represented by an oriented meridian and $[\mu_{P}^*]$ the analogous generator upstairs. The induced map $\pi_* \colon  H_1( S^3 - P ; \mathbb{Z}) \rightarrow H_1(\RP - \overline{P}; \mathbb{Z})$ is multiplication by $2$ and therefore 
     $2c_1(\mathfrak{s}_{w,z}(x)) = \pi^*c_1(\mathfrak{s}_{w,z}(\overline{x})).$ Using the relation  \[
     c_1(\underline{\mathfrak{s}}_{w,z}(x))) + \mu_{P}^*  = 2 A(x) \cdot \mu_{P}^*,
     \] the result follows: if, say, $c_1(\underline{\mathfrak{s}}_{w,z}(\overline{x})) = e[\mu_{\overline{P}}^*]$, then $c_1(\underline{\mathfrak{s}}_{w,z}(x)) = 2e[\mu_P^*]$ so
     \begin{align*}
         2e[\mu_P^*] &= (2A(x) - 1) [\mu_P^*] \textrm{ and }\\
         e[\mu_{\overline{P}}^*] &= (2A(\overline{x})-1)[\mu_{\overline{P}}^*]
     \end{align*} from which we can conclude that the lift of a generator in grading $k+1/2$ has Alexander grading $2k+1/2$.
 \end{proof}

\section{A spectral sequence for freely 2-periodic knots}\label{proofs}

Section \ref{localization} lays out hypotheses which, when met, allow for the application of Large's generalization of the Seidel--Smith localization theorem for Lagrangian Floer homology, recapped in Theorem \ref{Largelocal}. Section \ref{tniso} verifies one of these hypotheses in our setup. Section \ref{proofsection} is the proof of Theorem \ref{mainthm}.
\subsection{Polarization data and Large's localization theorem}\label{localization}
The following definitions and theorem originate from Section 3.2 of Large's paper \cite{Large}.
\begin{definition}
Let $(M,L_0,L_1$) be a symplectic manifold equipped with two Lagrangian submanifolds.
A set of \emph{polarization data for} $(M, L_0, L_1)$ is a triple $\mathfrak{p} = (E, F_0, F_1)$ where $E\twoheadrightarrow M$ is a symplectic vector bundle and $F_i$ is a Lagrangian subbundle of $E|_{L_i}$ for $i = 0, 1.$
\end{definition}
Letting $\underline{\C}$ and $\underline{\R}$ denote the trivial one-dimensional complex and real vector bundles over $M$, we may \emph{stabilize} polarization data $\mathfrak{p}=(E,F_0,F_1)$ by direct summing with $(\underline{\C},\underline{\R},i\underline{\R})$ to obtain \[\mathfrak{p}\oplus (\underline{\C},\underline{\R},i\underline{\R}):=(E\oplus \underline{\C},F_0\oplus \underline{\R},F_1\oplus i\underline{\R}).\]
\begin{definition}\label{isopol}
    Let $\mathfrak{p}=(E,F_0,F_1)$ and $\mathfrak{p}'=(E',F_0',F_1')$ be two sets of polarization data for $(M,L_0,L_1)$. An \emph{isomorphism of polarization data} is a symplectic vector bundle isomorphism $\alpha\colon E\to E'$ such that $\alpha(F_i)$ is homotopic through Lagrangian subbundles to $F_i'$ for $i=0,1.$ We say that polarization data $\mathfrak{p}$ and $\mathfrak{p}'$ are \emph{stably isomorphic} if $\mathfrak{p}\oplus (\underline{\C},\underline{\R},i\underline{\R})^{\oplus k}$ and $\mathfrak{p}'\oplus (\underline{\C},\underline{\R},i\underline{\R})^{\oplus k'}$ are isomorphic polarization data for some $k,k'\in \N.$
\end{definition}

\begin{definition}\label{tangentnormalisodef}
   Let $M$ be a symplectic manifold equipped with two Lagrangians $L_0$ and $L_1$. Suppose $\tau\colon (M,L_0,L_1)\to(M,L_0,L_1)$ is a symplectic involution with $\tau-$invariant sets \[(\M,\Lo,\Li)\subset(M,L_0,L_1).\] Define the \emph{tangent polarization} \begin{equation}
    \mathfrak{p}_T:=(T\M,T\Lo,T\Li)
\end{equation} and the \emph{normal polarization} \begin{equation}\mathfrak{p}_N:=(N\M,N\Lo,N\Li).\end{equation} Then a \emph{stable tangent normal isomorphism} is a stable isomorphism of the polarization data $\mathfrak{p}_T$ and $\mathfrak{p}_N$.
\end{definition}
The following theorem is mainly due to Large \cite{Large}. The fact that the spectral sequence splits along components of the path space $P(L_0,L_1)$ was first observed in \cite{hendricks2022rank}.
\begin{theorem}\label{Largelocal}
    Suppose that 
    \begin{enumerate}
        \item $M$ is an exact symplectic manifold and convex at infinity, and $L_0$, $L_1$ are compact exact Lagrangians.
        \item There is a symplectic involution $\tau\colon (M,L_0,L_1)\to(M,L_0,L_1)$ and an associated stable tangent normal isomorphism from $\mathfrak{p}_T$ to $\mathfrak{p}_N.$

    \end{enumerate}
Then there is a spectral sequence with $E^1$ page equal to $HF(L_0,L_1)\otimes\F[\theta,\theta^{-1}]$ and $E^\infty$ page isomorphic to $HF(\Lo, \Li)\otimes\F[\theta,\theta^{-1}]$.
\end{theorem}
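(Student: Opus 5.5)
This result is essentially a citation, so the plan is to reduce it to Large's theorem \cite[Section 3.2]{Large} and check that our hypotheses match his. I would then sketch the Seidel--Smith mechanism underneath it \cite{SS}. The overall structure is to build a $\Z/2$-equivariant (Borel) Floer complex for $(M,L_0,L_1,\tau)$ and compare two ways of computing it.

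\textbf{Step 1: the equivariant complex.} Hypothesis (1) (exactness, convexity at infinity, compactness of the $L_i$) guarantees that $HF(L_0,L_1)$ and $HF(\Lo,\Li)$ are well defined with no bubbling. Using $\tau$-invariant families of almost complex structures parametrized by $S^\infty = E\Z/2$, one forms the Borel complex $CF_{\Z/2}(L_0,L_1)$ over $\F[\![\theta]\!]$, which is a module over $H^*(B\Z/2)=\F[\theta]$. Filtering it by powers of $\theta$ gives a spectral sequence whose $E^1$ page is $HF(L_0,L_1)\otimes\F[\theta]$. After inverting $\theta$, this becomes the claimed $E^1$ page $HF(L_0,L_1)\otimes\F[\theta,\theta^{-1}]$.

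\textbf{Step 2: localization.} Show that $\theta^{-1}HF_{\Z/2}(L_0,L_1)\cong HF(\Lo,\Li)\otimes\F[\theta,\theta^{-1}]$. Generators already localize, because $\tau$-invariant intersection points are exactly $\Lo\cap\Li$, and non-invariant ones come in free orbits that die after inverting $\theta$. The real content is the differential: one needs that, for suitably generic equivariant data, the count of $\tau$-invariant holomorphic strips in $M$ agrees with the count of strips in $\M$. Equivalently, the equivariant contribution of the normal directions must be trivial. This requires equivariant transversality, which fails in general. Seidel--Smith overcome it with a stable normal trivialization: a trivialization of $(N\M,N\Lo,N\Li)$ that lets one deform the normal Cauchy--Riemann operators until their kernels and cokernels vanish, by an index and spectral-flow argument. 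Large's observation is that a stable isomorphism $\mathfrak p_N\cong\mathfrak p_T$ suffices in place of a trivialization. One instead compares the normal linearized operator of an invariant strip with the tangent one. For a generic $\tau$-invariant $J$, the tangent operator is surjective and the tangent polarization controls its index. The polarization isomorphism then matches the normal index and determinant line bundles with the tangent ones, so an argument like Seidel--Smith's Lemma on vanishing of the normal index shows the normal contributions are invertible after inverting $\theta$. This step is the main obstacle, and in the paper I would import it verbatim from \cite{Large} rather than reprove it.

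\textbf{Step 3: the splitting.} Following \cite{hendricks2022rank}, observe that all the structure maps, both the differentials and the continuation maps used in localization, count strips. Strips preserve the component of the path space $P(L_0,L_1)$ containing their endpoints, and $\tau$ acts compatibly on these components. Hence the Borel complex, its $\theta$-filtration and the localization isomorphism all decompose as direct sums over (invariant) components. The spectral sequence therefore splits accordingly, which is what will later give the splitting along Alexander gradings in Theorem \ref{mainthm}.
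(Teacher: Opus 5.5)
Your proposal takes the same approach as the paper: the paper gives no independent proof of this theorem and simply attributes it to Large \cite{Large}, crediting the splitting along components of $P(L_0,L_1)$ to Hendricks--Lidman--Lipshitz \cite{hendricks2022rank}, which is exactly the reduction you make. Your sketch is also consistent with the paper's recap of Large's mechanism in the proof of Theorem \ref{mainthm}: there, an equivariant exact Lagrangian isotopy makes $\widetilde{\tau}_\#$ a chain map, and your Borel complex then reduces to the double complex $(CF(\widetilde{L}_0,\widetilde{L}_1)\otimes\F[\theta],\widetilde{\partial},\theta(1+\widetilde{\tau}_\#))$, followed by Large's localization isomorphism.
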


\subsection{Tangent normal isomorphism}\label{tniso}
In this subsection we show that assumption $(2)$ of Theorem \ref{Largelocal} is satisfied in our setup.
\begin{lemma}\label{zfoldcover}
    The double cover $S^3 - P \rightarrow \mathbb{RP}^3 - \overline{P}$ lifts to a $\mathbb{Z}$-fold cover. 
\end{lemma}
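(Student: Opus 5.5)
The plan is to interpret the statement through covering space theory, as in Large's setting. The double cover $S^3 - P \to \RP - \overline{P}$ corresponds to the index-two subgroup $\pi_1(S^3-P)\subset \pi_1(\RP-\overline{P})$, i.e.\ to a surjection $\rho\colon \pi_1(\RP-\overline{P})\to \Z/2\Z$. The statement that this double cover lifts to (is a quotient of) a $\Z$-fold cover means that $\rho$ factors as $\pi_1(\RP-\overline{P})\xrightarrow{\phi}\Z\to\Z/2\Z$, where $\phi$ is surjective and the second map is reduction mod $2$. Since $\Z$ and $\Z/2\Z$ are abelian, everything factors through $H_1(\RP-\overline{P};\Z)$. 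So the whole task is to compute $H_1(\RP-\overline{P};\Z)$ and identify $\rho$ on it.

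First I will compute $H_1(\RP-\overline{P};\Z)$. Since $\RP$ is a rational homology sphere, Alexander duality with rational coefficients shows this group has rank one. For the torsion part, I would use Mayer--Vietoris for $\RP = (\RP-\nu^\circ\overline{P})\cup \nu\overline{P}$:
\[H_1(\partial\nu\overline{P})\to H_1(\RP-\overline{P})\oplus H_1(\nu\overline P)\to H_1(\RP)\to 0.\]
Here $H_1(\nu\overline P)=\Z$ is generated by $[\overline{P}]$, which maps onto the generator of $\Z/2\Z$ because $\overline{P}$ is class $1$. From this I expect $H_1(\RP-\overline{P})\cong\Z$, generated by the meridian $\mu_{\overline P}$, with the longitude mapping to twice a generator up to sign. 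This matches the fact, used in Proposition \ref{gradingupstairsdownstairs}, that $\pi_*$ is multiplication by $2$ on $H_1$ with meridians. A cleaner way to see the absence of torsion is that $H^2(\RP,\overline P;\Z)\cong\Z$, which was used in Lemma \ref{gradingdiffs}.

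Next I will identify $\rho$. The cover $S^3-P\to\RP-\overline{P}$ is the restriction of $S^3\to\RP$, so $\rho$ is the composition $\pi_1(\RP-\overline P)\to\pi_1(\RP)=\Z/2\Z$ induced by inclusion. On $H_1\cong\Z=\langle\mu_{\overline P}\rangle$, the meridian bounds a small disk in $\RP$ and so maps to $0$. This would force $\rho$ to be trivial, which contradicts the fact that the cover is connected. So the correct generator must be a class $g$ with $\mu_{\overline P}=2g$ (up to sign), and $g$ maps to the nontrivial element. This is the delicate point, and I expect it to be the main obstacle: pinning down the structure of $H_1(\RP-\overline{P})$ and the image of the meridian correctly.

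To settle it, I would compute directly from a presentation. The exterior of a class $1$ knot in $\RP$ is obtained from the solid torus $\RP$ minus a ball, or alternatively via the free diagram of Subsection \ref{2periodic}. Using the braid closure $T\circ\mathrm{flip}(T)$, the quotient exterior has a Wirtinger-type presentation in which the meridian generators are identified. I expect $H_1\cong\Z$ generated by a class $g$ with $2g=\pm\mu_{\overline P}$, or possibly $\Z$ with the meridian primitive. In the first case, the map sending $g\mapsto 1$ reduces mod $2$ to $\rho$, because $g$ is represented by a loop lifting to an arc joining $\tau$-paired points. Then $\phi$ is surjective onto $\Z$ and its mod $2$ reduction is $\rho$, which gives the $\Z$-fold cover factoring through the double cover. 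In either case, the proof would conclude by taking $\phi$ to be an isomorphism $H_1(\RP-\overline P)\cong\Z$ and checking that $\rho$ is nonzero and hence equal to $\phi \bmod 2$, since $\Hom(\Z,\Z/2\Z)$ has a unique nonzero element.
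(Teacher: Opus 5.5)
Your end-game is correct and is the paper's argument in algebraic form. The paper chooses $\overline f\colon\RP-\overline P\to S^1$ with $\overline f_*$ an isomorphism on $H_1$, pulls back $z\mapsto z^2$ and $\R\to S^1$, and identifies the connected double cover with $S^3-P$ because connected double covers are unique. Your $\phi$ plays the role of $\overline f_*$, and your step ``$\rho\neq 0$ because $S^3-P$ is connected, and $\Hom(\Z,\Z/2\Z)$ has a unique nonzero element'' is that same uniqueness statement.

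The one substantive input, that $H_1(\RP-\overline P;\Z)$ is torsion-free, is not actually established.
\begin{itemize}
\item Rational Alexander duality and your Mayer--Vietoris sequence give only three facts: rank one, $\mu_{\overline P}$ of infinite order, and $H_1/\langle\mu_{\overline P}\rangle\cong\Z/2\Z$. All three also hold for the abstract group $\Z\oplus\Z/2\Z$ with $\mu_{\overline P}=(1,0)$, even with the class $1$ data put into the sequence.
\item The ``presentation'' computation is never carried out.
\item The appeal to $H^2(\RP,\overline P;\Z)\cong\Z$ borrows an assertion that the paper states without proof, and it needs Lefschetz duality to say anything about $H_1$ of the complement.
\end{itemize}

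The missing ingredient is duality. Let $X$ be the exterior. Then $H_1(X,\partial X)=\operatorname{coker}(H_1(\partial X)\to H_1(X))=H_1(\RP)/\langle[\overline P]\rangle=0$, because $\overline P$ is class $1$. By Lefschetz duality $H_1(X)\cong H^2(X,\partial X)$, and by universal coefficients the torsion of $H^2(X,\partial X)$ is isomorphic to the torsion of $H_1(X,\partial X)=0$. So $H_1(X)\cong\Z$. The paper's proof uses this implicitly as well: it is what makes $\overline f_*$, not just $\overline f^*$, an isomorphism, and what gives $H^1(\RP-\overline P;\Z/2\Z)\cong\Z/2\Z$.

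Finally, drop your initial claim that $\mu_{\overline P}$ generates, and also the later hedge ``$\Z$ with the meridian primitive.'' You yourself showed that either would force $\rho=0$. In fact $\mu_{\overline P}=\pm 2g$, which is exactly why $\pi_*$ is multiplication by $2$ in Proposition~\ref{gradingupstairsdownstairs}. Your remark that a primitive meridian ``matches'' that proposition is therefore backwards.
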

\begin{proof}
Since $[\RP-\overline{P},S^1]\cong H^1(\mathbb{RP}^3 - \overline{P} ; \mathbb{Z}) \cong \mathbb{Z}$, there is a map $\overline{f}:\RP- \overline{P}\to S^1$ such that $\overline{f}^*:H^1(S^1;\Z)\to H^1(\RP-\overline{P};\Z)$ is an isomorphism. Applying the universal coefficient theorem, we obtain the following commutative diagram 
\begin{center}
\begin{tikzcd}
    0 \arrow{r} & H^1(\mathbb{RP}^3 - \overline{P} ; \mathbb{Z}) \arrow{r}{\cong} & \Hom( H_1(\mathbb{RP}^3 - \overline{P} , \mathbb{Z}) \arrow{r} & 0 \\
    0 \arrow{r} & \arrow{u}{\overline{f}^*} H^1(S^1, \mathbb{Z}) \arrow{r}{\cong} & \Hom(H_1(S^1), \mathbb{Z}) \arrow{r}\arrow{u}{(\overline{f}_*)^*} & 0
\end{tikzcd}
\end{center}
and therefore conclude that $\overline{f}_*:H_1(\RP-\overline P;\Z)\to H_1(S^1;\Z)$ is an isomorphism. Consider the following pullback diagram:
\begin{center}
\begin{tikzcd}
    \overline{f}^* S^1 \arrow{d}{\pi} \arrow{r}{f} & S^1 \arrow{d}{z^2} \\
    \mathbb{RP}^3 - \overline{P} \arrow{r}{\overline{f}} & S^1
\end{tikzcd}.
\end{center}
It is clear that $\overline{f}^*S^1$ lifts to a $\Z$-fold cover of $\RP-\overline{P}$, namely the pullback under $\overline{f}$ of the universal cover $\R\to S^1$. We now identify the pullback $\overline{f}^*S^1$ with $S^3 - P$. Indeed  $H^1(\mathbb{RP}^3 - \overline{P}; \mathbb{Z}_2) \cong \mathbb{Z}_2$ and therefore there is only one connected double cover of $\mathbb{RP}^3 - \overline{P}$, namely $S^3 - P$. It remains to show that $\overline{f}^*S^1$ is connected. For $z \in \mathbb{RP}^3 - \overline{P}$ let $\pi^{-1}(z) = z_1 \cup z_2$ be the fiber above $z$. Let $\gamma \subset \mathbb{RP}^3 - \overline{P}$ be a loop representing a generator of $H_1(\mathbb{RP}^3 - \overline{P})$ based at $z$. The restriction $\overline{f}|_{\gamma}$ is a degree $1$ map between circles, and hence $\overline{f}|_\gamma^*S^1\subset \overline{f}^*S^1$ contains a path connecting $z_1$ and $z_2$.

\end{proof}

Supplied with $\overline{f}$ and $f$, the proof of \cite[~Proposition 10.1]{Large}, which explicitly constructs a tangent normal isomorphism from $\overline{f}$ and $f$ in the presence of a $\Z$, cover applies verbatim to give us the following proposition.

\begin{proposition}\label{tangentnormaliso}
    The normal polarization $\mathfrak{p}$ of $Sym^g(\overline{\Sigma}-\{w,z\}) \subset Sym^{2g}(\Sigma - \{w_1, w_2, z_1, z_2 \} )$ with Lagrangians $\mathbb{T}_{\overline{\alpha}} \subset \mathbb{T}_{\alpha}$, $\mathbb{T}_{\overline{\beta}} \subset \mathbb{T}_{\beta}$ is isomorphic to the tangent polarization $(TSym^g(\Sigma - \{ w, z \}), T\mathbb{T}_{\alpha}, T\mathbb{T}_{\beta})$ of $Sym^g(\Sigma, w , z)$. 
\end{proposition}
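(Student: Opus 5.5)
We follow the strategy of \cite[~Proposition 10.1]{Large}, with the map $\overline{f}\colon \RP-\overline{P}\to S^1$ and its lift $f$ from Lemma \ref{zfoldcover} as input. Write $M=\Sym^{2g}(\Sigma-\{w_1,w_2,z_1,z_2\})$ and $\M=\Sym^{g}(\overline{\Sigma}-\{w,z\})$. The inclusion $\M\hookrightarrow M$ sends an unordered $g$-tuple to its preimage under $\pi\colon\Sigma\to\overline{\Sigma}$. At a point $\overline{\bm{x}}=\{\overline{x}_1,\dots,\overline{x}_g\}$ with distinct entries, $T_{\overline{\bm{x}}}\M=\bigoplus_i T_{\overline{x}_i}\overline{\Sigma}$. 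The tangent space of $M$ at the lift is $\bigoplus_i (T_{x_i}\Sigma\oplus T_{\tau x_i}\Sigma)$, where $\tau$ acts by swapping the two summands. Hence $T\M$ is the $(+1)$-eigenbundle and $N\M$ is the $(-1)$-eigenbundle. The two bundles agree away from the diagonal, up to twisting by the real line bundle of the double cover, i.e.\ tensoring each factor with the sign representation.

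\textbf{Step 1: an explicit identification.} Pulled back to the double cover of each factor, the normal summand over $\overline{x}_i$ is $\{(v,-\tau_* v)\}$, and the tangent summand is $\{(v,\tau_*v)\}$. These become identified after multiplication by a function that is $\pm 1$ on the two sheets. The $\Z$-cover supplies a global square root: restricting $f$ to $\Sigma-\{\text{basepoints}\}\subset S^3-P$ gives a map $\Sigma\to S^1$ with $f(\tau x)=-f(x)$, since the deck transformation acts on the pullback of $z\mapsto z^2$ by $z\mapsto -z$. We therefore define
\[
\Phi\colon T\M\to N\M,\qquad (v,\tau_*v)_{x_i}\mapsto \bigl(f(x_i)v,\,-f(x_i)\tau_*v\bigr)=\bigl(f(x_i)v,\,f(\tau x_i)\tau_*v\bigr).
\]
This is independent of which lift $x_i$ is chosen, and it is complex linear because it is multiplication by a unit complex scalar in each factor. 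It is therefore a symplectic (unitary) bundle isomorphism. As in Large, it is defined first on the symmetric product with Perutz's symplectic form. The symmetric-product smooth structure near the diagonal must be handled by expressing the construction through the symmetric-function coordinates; Large does this using the $\Z$-cover, and the argument carries over since only the local model on $\Sigma\to\overline{\Sigma}$ enters.

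\textbf{Step 2: the Lagrangian subbundles.} Over $\mathbb{T}_{\overline{\alpha}}$ we have $T\Lo=\bigoplus_i T\overline{\alpha}_i$ and $N\Lo=\bigoplus_i\{(v,-\tau_*v): v\in T\alpha_i\}$. The map $\Phi$ sends $T\overline{\alpha}_i$ to $f(x_i)\cdot(\text{real line})$, which differs from $N\Lo$ by the rotation $f(x_i)\in S^1$. We need a homotopy of Lagrangian subbundles from $\Phi(T\Lo)$ to $N\Lo$. Each $\alpha$ curve lifts to two disjoint curves in $\Sigma$ exchanged by $\tau$, so $f|_{\alpha_i}$ is nullhomotopic: it is a circle map whose lift through $z\mapsto z^2$ exists because the composite $\alpha_i\to S^3-P\to\RP-\overline{P}\to S^1$ factors through a contractible handlebody loop, up to adjustments. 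Choosing a homotopy of $f|_{\alpha_i}$ to a constant then rotates $\Phi(T\Lo)$ to $N\Lo$, possibly after stabilizing to absorb a constant phase. The same argument applies to the $\beta$ curves.

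\textbf{Main obstacle.} The hardest step is Step 2's nullhomotopy claim: that $f$ restricted to each $\alpha$ and $\beta$ curve has degree $0$, equivalently that $\overline{f}$ pulls back to zero on the relevant loops in $H_1$. This reduces to checking that $\alpha$ and $\beta$ curves bound disks in the handlebodies, so they are nullhomologous in $\RP-\overline{P}$ away from the knot arcs pushed in. I would verify this first; it is exactly where Large's hypothesis enters and where the present setting must match his verbatim.
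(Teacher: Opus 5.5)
Your overall route matches the paper's. The paper's entire proof is the remark that Large's argument for \cite[Proposition 10.1]{Large} goes through verbatim once Lemma \ref{zfoldcover} supplies $\overline f$ and its anti-invariant lift $f$; had you stopped at that citation, you would agree with it. The problem is the explicit construction you put in its place. Your $\Phi$, factorwise multiplication by the smooth unimodular $f$, is only defined at divisors with distinct points, and it does not extend over the diagonal of $\M=\Sym^g(\overline\Sigma-\{w,z\})$. To see this, take a divisor $2p$, work on one sheet with a local holomorphic coordinate $u$, use symmetric coordinates $s_1=u_1+u_2$, $s_2=u_1u_2$, and identify normal vectors with tangent vectors on that sheet. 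Then the $s_1$-component of $\Phi(\dot s_1,\dot s_2)$ is
\[
-\frac{f(u_1)-f(u_2)}{u_1-u_2}\,\dot s_2+\frac{u_1f(u_1)-u_2f(u_2)}{u_1-u_2}\,\dot s_1 ,
\]
and the first coefficient has no limit as $u_2\to u_1$ unless $\partial_{\bar u}f(u_1)=0$. A unimodular function with $\partial_{\bar u}f\equiv 0$ is locally constant. An anti-invariant $f$ on the connected surface $\Sigma^\circ=\Sigma-\{w_1,w_2,z_1,z_2\}$ is not. For $g\ge 2$ the diagonal lies over every point of $\overline\Sigma$. So your claim that the diagonal is handled by passing to symmetric-function coordinates is false for this $\Phi$. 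Invariantly, $T_D\M=H^0(\mathcal{O}_D(D))$, and for non-reduced $D$ only holomorphic functions act on it.

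The missing idea is to replace $f$ by a holomorphic anti-invariant function. The surface $\overline\Sigma^\circ=\overline\Sigma-\{w,z\}$ is an open Riemann surface. Hence the flat line bundle $\mathcal{L}_\C$, whose sections are anti-invariant functions on $\Sigma^\circ$, is holomorphically trivial. Also $\mathcal{O}^*(\overline\Sigma^\circ)\to H^1(\overline\Sigma^\circ;\Z)$ is onto, since $H^1(\overline\Sigma^\circ;\mathcal{O})=0$. So there is a nowhere-vanishing holomorphic $F$ on $\Sigma^\circ$ with $F\circ\tau=-F$ and $F/|F|$ homotopic to $f$ through anti-invariant maps. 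Because $\pi$ is unramified, the $(-1)$-eigenbundle has fibre $N_D\M=H^0(\mathcal{O}_D(D)\otimes\mathcal{L}_\C)$. Multiplication by $F$ is then a complex-linear isomorphism $T\M\to N\M$ over all of $\M$, which you can deform to a symplectic one by polar decomposition. At reduced $D$ it is exactly your formula with $F$ in place of $f$.

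Your Step 2 then goes through, since $\mathbb{T}_{\overline\alpha}$ and $\mathbb{T}_{\overline\beta}$ avoid the diagonal and $\deg (F/|F|)|_{\alpha_i}=\deg f|_{\alpha_i}$. That degree is zero, but not for the first reason you give: a two-component lift only yields $\deg\overline f|_{\overline\alpha_i}=2\deg f|_{\alpha_i}$. The correct reason is the one in your last paragraph, and it is immediate rather than the main obstacle. Every attaching curve, including the two meridional $\beta$ curves, bounds a compressing disk disjoint from the arcs $\xi_i,\zeta_i$ forming $P$, so it is nullhomotopic in $S^3-P$, where $f$ is defined. Finally, no stabilization is needed to remove a constant phase.
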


\subsection{Proof of Theorem \ref{mainthm}}\label{proofsection}
\begin{proof}

Fix a free diagram $D$ for $P$, and let \[\mathcal{H}=(\Sigma, \bm{\alpha},\bm{\beta}, \bm{w}=\{w_1, w_2\}, \bm{z}=\{z_1,z_2\})\] be a 4-pointed $\tau$-equivariant Heegaard diagram associated to $D$ as in Section \ref{sympring}. The genus of $\Sigma$ is an odd number, say $2g-1$ for some $g\ge 1.$ Then the knot Floer homology of $\mathcal{H}$ \[\widetilde{HFK}(\mathcal{H})=HF(\mathbb{T}_{\bm{\alpha}}, \mathbb{T}_{\bm{\beta}})\] is the Lagrangian Floer homology of $L_0:=\mathbb{T}_{\bm{\alpha}}$ and $L_1:=\mathbb{T}_{\bm{\beta}}$ computed in the manifold $M:=\textrm{Sym}^{2g}(\Sigma-\{w_1,w_2,z_1,z_2\})$ endowed with an appropriate symplectic form $\omega'$ as in \cite{perutz}.

The techniques of \cite[~Section 4]{Hendricks2012rank} generalize immediately to show that $\omega'$ can be modified to a $\tau$-equivariant symplectic form $\omega\in\Omega^2(M)$ with respect to which $M$ is convex at infinity and $L_0$ and $L_1$ are still Lagrangians. Proposition 4.2 \cite{hendricks2022rank} then shows that there exists a primitive $\lambda$ of $\omega$ with $\lambda|_{L_0}=df_0$ and $\lambda|_{L_1}=df_1$ for some $f_i\in C^\infty(L_i)$ so that $L_0$ and $L_1$ are exact Lagrangians. Therefore assumption $(1)$ of Theorem \ref{Largelocal} is satisfied for our $(M, L_0, L_1).$ Assumption $(2)$ is also satisfied by Proposition \ref{tangentnormaliso}. Therefore there is a spectral sequence with $E^1$ page equal to $HF(L_0,L_1)\otimes \F[\theta,\theta^{-1}]=\HFKhat(P)\otimes V\otimes \F[\theta,\theta^{-1}]$ and $E^\infty$ page equal to $HF(\Lo,\Li).$

 The fixed point set of the involution $\tau\colon  M\to M$ defined by $\tau(x_1,...,x_{2g})=(\tau(x_1)...\tau(x_{2g}))$ is $\{(x_1\tau(x_1)...x_g\tau(x_g))\}$. Recall the quotient Heegaard diagram $\overline{\mathcal{H}}=\mathcal{H}/\tau$
for the quotient knot $\overline{P}\subset \RP$. If we define $(M',L_0',L_1'):=(\Sym^g(\Sigma-\{w,z\}), \mathbb{T}_{\overline{\bm{\alpha}}},\mathbb{T}_{\overline{\bm{\beta}}}),$ there is a natural identification $(\M, \Lo, \Li)\cong (M', L_0', L_1')$. This identification is in fact a biholomorphic map inducing an isomorphism of Lagrangian Floer homologies $HF(\Lo,\Li)\cong HF(L_0',L_1')=\HFKhat(\RP, \overline{P})$ \cite[~Appendix 1]{Hendricks2012rank}. 

Now we explain why the spectral sequence splits over Alexander gradings. Let us recap how Large's Theorem \ref{Largelocal} works in our setup. We have the chain complex $CF(L_0,L_1)$ equipped with the (knot Floer) differential $\partial$, and we may also consider the map $\tau_\#\colon CF(L_0,L_1)\to CF(L_0,L_1)$ which takes $x\in L_0\cap L_1$ to $\tau(x)\in L_0\cap L_1$. The map $\tau_\#$ is not in general a chain map. Assuming that the hypotheses of Theorem \ref{Largelocal} are satisfied, then Large shows that there is an equivariant exact Lagrangian isotopy which replaces $(L_0,L_1)$ with new Lagrangians $(\widetilde{L}_0,\widetilde{L}_1)$ and fixes the invariant sets, and also a family of complex structures $\bf{J}$ so that the map $\widetilde{\tau}_\#$ on $CF(\widetilde{L}_0,\widetilde{L}_1)$ (defined by taking $x\in \widetilde{L}_0\cap\widetilde{L}_1$ to $\tau(x)\in \widetilde{L}_0\cap\widetilde{L}_1$) is a chain map. It should be noted that even in the case that $\tau_\#$ was a chain map, $\widetilde{\tau}_\#$ is not necessarily chain homotopy equivalent to $\tau_\#$. However, the manipulation of the Lagrangians described above to obtain a chain map $\widetilde{\tau}_\#$ crucially does not affect the Alexander grading. Therefore Proposition \ref{tauongradings} allows us to conclude that $\widetilde{\tau}_\#$ preserves Alexander gradings. The spectral sequence of Theorem \ref{Largelocal} is computed from 
\begin{enumerate}
    \item the vertically filtered Leray spectral sequence of the double complex \[(CF(\widetilde{L}_0,\widetilde{L}_1)\otimes \F[\theta],\widetilde{\partial},\theta(1+\widetilde{\tau}_\#))\] -- which has $E^\infty$ page equal to $CF_{\Z/2\Z}(\widetilde{L}_0,\widetilde{L}_1)\otimes \F[\theta]$ -- followed by
    \item identification of $CF_{\Z/2\Z}(\widetilde{L}_0,\widetilde{L}_1)\otimes \F[\theta,\theta^{-1}]$ with $CF(\Lo,\Li)\otimes \F[\theta,\theta^{-1}]$ via the \emph{localization isomorphism} of \cite[~Theorem 1.1]{Large}.
\end{enumerate}
From the above discussion we see that the Leray spectral sequence of the double complex from the first item above preserves Alexander gradings, since both of its differentials do. The localization isomorphism of the second step also preserves Alexander gradings as it is constructed by counting pseudoholomorphic disks of varying index and multiplication and division by powers of $\theta$ and no flowlines pass over the basepoint divisors $V_{\overline{z}}=\overline{z}\times \Sym^{g-1}(\overline{\Sigma})$ or $V_{\overline{w}}=\overline{w}\times \Sym^{g-1}(\overline{\Sigma})$. So we have that the spectral sequence splits over Alexander gradings. The argument that Alexander grading $s$ is carried to $\frac{2s+1}{4}$ is just an application of Proposition \ref{gradingupstairsdownstairs}.
\end{proof}

\begin{figure}
    \centering
    \includegraphics[width=0.15\linewidth]{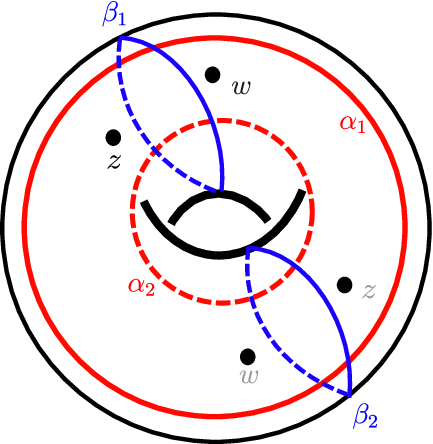}
    \caption{A four-pointed equivariant Heegaard diagram for the unknot}
    \label{fig:unknotheeg}
\end{figure}
\section{Examples}\label{examples}
\subsection{The unknot}
We provide an explicit calculation for the free 2-period on the unknot. Its quotient is the class 1 unknot in $\mathbb{RP}^3$. Using the crossingless diagram for the unknot (which is indeed a free diagram), we obtain the 4-pointed equivariant Heegaard diagram as seen in Figure \ref{fig:unknotheeg}. Since this is a \emph{nice} diagram in the sense of Sarkar-Wang \cite{SWNice}, we can compute the spectral sequence of Theorem \ref{mainthm} directly from this diagram as in \cite[~Section 5.2]{Hendricks_2016}. There are two Heegaard states which we label $a=\{\alpha_1\cap \beta_1,\alpha_2\cap\beta_2\}$ and $b=\{\alpha_1\cap \beta_2,\alpha_2\cap\beta_1\}$. As all elementary regions contain a basepoint, $E^1=\widetilde{HFK}(\mathcal{H})\otimes \F[\theta,\theta^{-1}] \cong \widetilde{CFK}(\mathcal{H})\otimes \F[\theta,\theta^{-1}]$. Now we compute $d^2$ which is the induced map $\text{Id}+ \tau_{*}$. We see that 
\[
d^2(a) = d^2(b) = 0
\]
since $\tau_*(a)=a$ and $\tau_*(b)=b$. The Alexander gradings for $a$ and $b$ are $0$ and $-1$ respectively. The higher differentials, $d^k$ for $k\ge2$, are identically 0 because of grading restrictions. The Alexander gradings $s=0$ and $s=-1$ are sent by the spectral sequence of \ref{mainthm} to Alexander gradings $\frac{1}{4}$ and $-\frac{1}{4}$, and indeed $\HFKhat(\RP, \overline{P})=\F_{(\frac{1}{4},\frac{1}{4})}\oplus \F_{(-\frac{1}{4},-\frac{1}{4})}$. 
\subsection{12n403}
Freely 2-periodic knots admit a rather simple type of equivariant Heegaard diagram: a grid diagram $\mathbb{G}$ in the sense of \cite{MOSgrids, OSSgridhomologybook} with even grid size such that the first and third quadrants are marked in the same way (we will call these A boxes), and the second and fourth quadrants are marked in the same way (B boxes). See the left hand side of Figure \ref{fig:12n403} for an example of such a grid diagram. The $\tau$ action on the grid complex is induced by swapping the A boxes with each other and the B boxes with each other. The quotient $\mathbb{G}/\tau$ of such a grid diagram is an $L(2,1)=\RP$ grid diagram for the quotient knot in $\RP$ in the sense of \cite{BGHgrid}, namely the one obtained by taking an adjacent A and B box. See the right hand side of Figure \ref{fig:12n403} for an example of such a grid diagram. \begin{figure}
    \centering

\begin{tikzpicture}[scale=0.48]

\node[gray!20, scale=8] at (2.5,2.5) {A};
\node[gray!20, scale=8] at (7.5,7.5) {A};
\node[gray!20, scale=8] at (2.5,7.5) {B};
\node[gray!20, scale=8] at (7.5,2.5) {B};

\foreach \i in {0,...,10} {
  \draw[gray!50] (\i,0) -- (\i,10);
  \draw[gray!50] (0,\i) -- (10,\i);
}

\draw[line width=1.2pt] (5,0) -- (5,10);
\draw[line width=1.2pt] (0,5) -- (10,5);

\foreach \x/\y in {
  0/0, 1/1, 2/2, 3/8, 4/9,
  5/5, 6/6, 7/7, 8/3, 9/4
} {
  \node at (\x+0.5,\y+0.5) {X};
}

\foreach \x/\y in {
  0/6, 1/7, 2/9, 3/0, 4/3,
  5/1, 6/2, 7/4, 8/5, 9/8
} {
  \node at (\x+0.5,\y+0.5) {O};
}

\end{tikzpicture}
\begin{tikzpicture}[scale=0.48]

\node[gray!20, scale=8] at (2.5,2.5) {A};
\node[gray!20, scale=8] at (7.5,2.5) {B};

\foreach \i in {0,...,10} {
  \draw[gray!50] (\i,0) -- (\i,5);
}
\foreach \j in {0,...,5} {
  \draw[gray!50] (0,\j) -- (10,\j);
}

\draw[line width=1.2pt] (5,0) -- (5,5);

\foreach \x/\y in {
  0/0, 1/1, 2/2, 8/3, 9/4
} {
  \node at (\x+0.5,\y+0.5) {X};
}

\foreach \x/\y in {
  3/0, 4/3, 5/1, 6/2, 7/4
} {
  \node at (\x+0.5,\y+0.5) {O};
}

\end{tikzpicture}
   
    \caption{Left: An equivariant grid diagram for $12n_{403}$. Right: An $\RP$ grid diagram for the quotient of $12n_{403}$.}
    \label{fig:12n403}
\end{figure}
There is a spectral sequence \begin{equation}\label{gridsequence} E^1=\widetilde{GH}(\mathbb{G})\otimes\F[\theta,\theta^{-1}]\rightrightarrows E^\infty \cong\widetilde{GH}(\mathbb{G}/\tau)\otimes \F[\theta,\theta^{-1}]\end{equation} by an identical argument to the proof of Theorem \ref{mainthm}. Let $X=\F\langle xx,xy,yx,yy\rangle$ equipped with the differential $d^1(xx)=d^1(yy)=0$ and $d^1(xy)=d^1(yx)=xy+yx$, and say that $\mathbb{G}$ is a grid of size $2n$. By the methods of \cite[~Section 5.3]{Hendricks_2016}, one can further show that the spectral sequence of Equation \ref{gridsequence} is obtained from the spectral sequence of Theorem \ref{mainthm} by tensoring $E^1$ with $X^{\otimes (n-1)}$. Since grid diagrams are nice diagrams, the results of \cite[~Section 5.2]{Hendricks_2016} allow us to compute this spectral sequence algorithmically; in particular, the $d^1$ differential is given by $\theta(1+\tau_*)$ on $\widetilde{GH}(\mathbb{G})\otimes\F[\theta,\theta^{-1}]$. 

In \cite{baldwin2007computationsheegaardfloerknothomology}, Baldwin--Gillam wrote C++ code to compute grid homology. Their code builds Graph, a graph with vertices corresponding to grid states and an edge set, Edges, corresponding to the grid homology differential. Each vertex in Graph is labeled with its Maslov grading $M$; the collection of all Maslov $M$ vertices is Graph[M]. We will denote this data structure as (Graph, Edges). A method called Reduce is applied to Graph to compute $\widetilde{GH}(\mathbb{G})$; it is Gaussian elimination over $\F$. We summarize it here for the benefit of the reader. Let min and max be the minimum and maximum Maslov gradings in $\widetilde{GC}(\mathbb{G})$.
\begin{Verbatim}[frame=single, fontsize=\small]
REDUCE
input: (Graph, Edges)
for(M=max; M>min; M=M-1):
    for s -> t in Edges,
          with s in Graph[M] and t in Graph[M-1]:
        J = {j != s : j -> t in Edges}
        K = {k : s -> k in Edges}
        for j in J:
            for k in K:
                toggle j -> k in Edges
        delete all incoming and outgoing Edges to and from s,t
        delete s,t from Graph
return Graph
\end{Verbatim}
We can summarize what happens in each loop over a Maslov grading $M$ as follows. For each source-target pair $s\to t$ with $s$ in Maslov $M$, collect all of the out-vertices of $s$ into $K$ and all of the in-vertices of $t$ besides $s$ into $J$. Then make the basis changes $J\to J+s$ and $t\to \sum_{k\in K}k$ and update Edges to reflect this basis change. Delete the (now) cancelling pair $s\to t$ from Graph.

Below, we record the homological algebraic lemma which enabled us to modify Baldwin--Gillam's code to compute $\tau_*$ on $\widetilde{GH}(\mathbb{G})$.
\begin{lemma}\label{lem:equivariant_cancellation}
Suppose $(C,\partial)$ is a $\Z-$graded chain complex over $\mathbb{F}_2$ equipped with an involutive gradings preserving chain map $\tau \colon C \to C$, and suppose that, forgetting $\tau$, we may write
\[
C = D \oplus \langle s,t\rangle
\]
as chain complexes, where $\partial s=t$ and $\partial t=0$. Let $\pi \colon C \to D$ denote projection and let $i \colon D \hookrightarrow C$ denote inclusion. Define
\[
\partial'=\partial|_D, \qquad \tau'=\pi(\tau|_D).
\]
Then $(C,\partial,\tau)$ and $(D,\partial',\tau')$ are $\mathbb{Z}/2\mathbb{Z}$--equivariantly chain homotopy equivalent.
\end{lemma}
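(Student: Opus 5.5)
The plan is to obtain the equivariant equivalence from the homological perturbation lemma applied to the Borel complex. I interpret "$\Z/2\Z$--equivariantly chain homotopy equivalent" the way it is used in the spectral sequence of Theorem \ref{Largelocal}: as homotopy equivalence of the Borel complexes $(C\otimes\F[\theta],\partial+\theta(1+\tau))$ and $(D\otimes\F[\theta],\partial_D^{\theta})$ over $\F[\theta]$. Here $\partial_D^{\theta}$ has leading terms $\partial'+\theta(1+\tau')$, and the higher-order terms in $\theta$ record the homotopy coherence data. If a strict statement is wanted, one only has to show that $\tau'$ is a chain map homotopic to an involution and that $\pi,i$ intertwine $\tau,\tau'$ up to coherent homotopy; this is exactly the information the Borel complex encodes.

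\textbf{Step 1 (strong deformation retraction, forgetting $\tau$).} Since $C=D\oplus\langle s,t\rangle$ as complexes, $\pi$ and $i$ are chain maps and $\pi i=\mathrm{id}_D$. Define $H\colon C\to C$ by $H(t)=s$ and $H=0$ on $D\oplus\langle s\rangle$. Checking on $s$, on $t$ and on $D$ gives
\[
i\pi=\mathrm{id}_C+\partial H+H\partial .
\]
The side conditions $H^2=0$, $\pi H=0$ and $Hi=0$ hold by inspection. Note also that $H$ raises the grading by one, since $s$ and $t$ are homogeneous and $\partial s=t$.

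\textbf{Step 2 (perturbation).} Work over $\F[\theta]$ and treat $\delta=\theta(1+\tau)$ as a perturbation of $\partial$ on $C[\theta]$. Because $\delta$ is divisible by $\theta$, the series $\sum_k (H\delta)^k$ converges $\theta$-adically; in fact each fixed power of $\theta$ receives only finitely many terms. The perturbation lemma then produces a new differential on $D[\theta]$,
\[
\partial_D^\theta=\partial'+\sum_{k\ge 0}\pi\,\delta\,(H\delta)^k\,i
=\partial'+\theta(1+\tau')+\theta^2\,\pi(1+\tau)H(1+\tau)i+\cdots,
\]
together with perturbed maps $i^\theta=\sum_k (H\delta)^k i$ and $\pi^\theta=\sum_k \pi(\delta H)^k$, and a homotopy $H^\theta$, which together form a deformation retraction of $(C[\theta],\partial+\delta)$ onto $(D[\theta],\partial_D^\theta)$. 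Reading off the coefficients of $\theta$:
\begin{enumerate}
\item the $\theta^1$ coefficient of $(\partial_D^\theta)^2=0$ shows that $\tau'$ is a chain map for $\partial'$;
\item the $\theta^2$ coefficient shows that $\tau'^2$ is homotopic to $\mathrm{id}$, with the homotopy given by the $\theta^2$ term above;
\item $\pi^\theta$ and $i^\theta$ are $\tau$-equivariant up to the stated homotopies.
\end{enumerate}
Every map involved preserves the grading once $\theta$ is assigned its usual degree, because $\tau$ preserves gradings and $H$ has degree $+1$. This grading compatibility is what Baldwin--Gillam's REDUCE needs, since it cancels pairs one Maslov grading at a time.

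\textbf{Main obstacle.} The key difficulty is that $D$ is generally not $\tau$-invariant. As a result, $\tau'=\pi\tau i$ is not literally an involution, and the naive maps $\pi,i$ are not strictly equivariant. For that reason I do not try to verify equivariance directly; the perturbation lemma is the device that manufactures the correction terms, and convergence is free because of the $\theta$-adic filtration. The remaining work is bookkeeping. One checks the side conditions, so that the perturbed data really is a deformation retraction. One also notes that the induced spectral sequences, and in particular $\tau_*$ on homology, agree: the leading term $\theta(1+\tau')$ is exactly what the modified code computes. Iterating the cancellation one pair at a time then computes $\tau_*$ on $\widetilde{GH}(\mathbb{G})$.
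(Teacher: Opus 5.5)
Your argument is correct, and it takes a more structural route than the paper. The paper works by hand on $D$. It keeps $\tau'=\pi\tau i$ and writes down one map $D\to C$, sending $x$ to $s$ times the $t$-coefficient of $\tau(x)$. It then checks $\partial H+H\partial'=\tau i+i\tau'$ basis element by basis element, and asserts that $\pi\tau=\tau'\pi$ holds strictly. You instead feed the cancellation data $(\pi,i,H)$, with $H(t)=s$, into the perturbation lemma with $\delta=\theta(1+\tau)$. The two are consistent: because $Hi=0$ and $\pi H=0$, your perturbed maps are $i^\theta=i+\theta H\tau i$ and $\pi^\theta=\pi+\theta\,\pi\tau H$, and $H\tau i$ is exactly the paper's homotopy.

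Your route buys three things. First, it makes explicit what the statement glosses over: $\tau'$ is in general only a homotopy involution, with $\tau'^2+\mathrm{id}=\partial'K+K\partial'$ for $K=\pi\tau H\tau i$. So $\partial'+\theta(1+\tau')$ alone need not square to zero on $D[\theta]$. Second, the $\theta$-linear part of $\pi^\theta$ supplies the homotopy $\pi\tau H$ between $\pi\tau$ and $\tau'\pi$. This is genuinely needed, because the paper's strict identity fails whenever $\tau(s)$ has a nonzero $D$-component. For example, take $C=\langle x,y,s,t\rangle$ with $x,s$ in degree $1$, $y,t$ in degree $0$, $\partial x=y$, $\partial s=t$, and $\tau$ exchanging $x\leftrightarrow s$ and $y\leftrightarrow t$; then $\tau'=0$ but $\pi\tau(s)=x$. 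Third, you obtain a $\theta$-linear deformation retraction of Borel complexes. This identifies the whole spectral sequence from $E^1$ on, and it iterates cleanly because each later cancellation is just another perturbation. By contrast, the paper's lemma, read literally, assumes an involution that is no longer present after the first cancellation, though its proof never uses $\tau^2=\mathrm{id}$.

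The paper's route buys elementarity. What the modified code needs is $\tau_*$ on homology, hence the $E^2$ page, and for that one explicit homotopy $\tau i\simeq i\tau'$ suffices, since $i$ is a quasi-isomorphism.

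Two small sharpenings of your write-up. First, the series actually terminates, so no completion of $\F[\theta]$ is required. Since $\tau$ preserves grading, $\tau(s)$ has no $t$-component, hence $H\delta H=0$ and $\partial_D^\theta=\partial'+\theta(1+\tau')+\theta^2\,\pi\tau H\tau i$ exactly. Second, what the $\theta^2$ coefficient gives is that $\tau'$ is a homotopy involution, not that it is homotopic to an involution.
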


\begin{proof}
Ignoring the involutions, $(C,\partial)$ and $(D,\partial')$ are chain homotopy equivalent because the maps $\pi \colon C \to D$ and $i \colon D \to C$ are chain maps satisfying $\pi i=\mathrm{id}_D$ and $i\pi \simeq \mathrm{id}_C$. It remains to verify equivariance up to chain homotopy. Since $\tau'=\pi\tau i$, we have $\pi\tau=\tau'\pi$ on the nose. Thus it suffices to construct a map $H \colon D \to C$ satisfying $\partial H + H\partial' = \tau i + i\tau'$. Fix a graded basis for $D$, and extend it to a graded basis for $C$ by adding in $s$ and $t$. Because $\tau$ is a chain map and $\partial s=t$, the coefficient of $s$ in $\tau(x)$ agrees with the coefficient of $t$ in $\tau(\partial x)$ for every $x\in D$. Define $H \colon D \to C$ on basis elements by setting $H(x)=s$ whenever the coefficient of $t$ in $\tau(x)$ is nonzero, and $H(x)=0$ otherwise; then extend linearly. Now let $x\in D$ be a basis element. If $\tau(x)$ contains $t$, then $\partial H(x)=\partial s=t$ and $H\partial'(x)=0$ since $\tau(\partial'(x))=\partial'(\tau(x))$ doesn't contain $t$; in this case, $\tau i(x)+i\tau'(x)=\partial H(x)+H\partial'(x)=t$. If $\tau(x)$ contains $s$, then $\partial H(x)=0$ and $H\partial'(x)=s$; in this case $\tau i(x)+i\tau'(x)=\partial H(x)+H\partial'(x)=s$. Finally if $\tau(x)$ contains neither $s$ nor $t$, then $\tau i(x)+i\tau'(x)=\partial H(x)+H\partial'(x)=0$.
\end{proof}
In \cite{parikh_tau_grid_homology}, the second author modified Baldwin--Gillam's code to compute the $E^2$ page of Equation \ref{gridsequence}. A second set of edges, Edges2, corresponding to the $\tau_\#$ action is constructed. Reduce in Baldwin--Gillam's code is replaced by Reduce2 which runs Reduce while simultaneously updating Edges2 according to the basis changes $J\to J+s$ and $t\to \sum_{k\in K}k$ described in Reduce above. Using Lemma \ref{lem:equivariant_cancellation}, we notice that deleting the canceling pair $s\to t$ produces a new (Graph, Edges, Edges2) which is $\Z/2\Z$-equivariantly chain homotopy equivalent to the original. Once Reduce finishes running and Edges is empty, this means Edges2 is $\tau_*:\widetilde{GH}(\mathbb{G})\to \widetilde{GH}(\mathbb{G})$. Below we summarize this algorithm in pseudocode. It should be noted that in the actual code implementation, two graphs are constructed -- Graph and Graph2 with identical vertex sets and edge sets corresponding to the grid homology differential and $\tau_\#$ respectively -- but this is simply a cosmetic difference that made it easier to modify the existing code. The code also computes the ranks of $1+\tau_*$ homology on $\widetilde{GH}(\mathbb{G})$ in each $(M,A)$ grading, i.e., the ranks of the $E^2$ page of Equation \ref{gridsequence} using \[\dim(\ker(1+\tau_*))-\dim(\textrm{im}(1+\tau_*))=\dim(\widetilde{GH}(\mathbb{G}))-2\cdot\textrm{rank}(1+\tau_*).\] 
In \cite{parikh_tau_grid_homology} we include the full output of this program for an $8\times8$ equivariant diagram for $T(2,3)\#T(2,3)$. It displays all of the edges in Graph2 after Reduce2 has been run, listed by Maslov and Alexander grading. At the end, it contains the ranks of the $1+\tau_*$ homology of Graph2, i.e., the ranks of the $E^2$ page of Equation \ref{gridsequence}. For convenience we write below the Poincar\'{e} polynomial of the $E^2$ page according to the program output, where $q$ records the Maslov grading and $t$ records the Alexander grading: \[
q^{-7}t^{-9}
+q^{-6}t^{-8}
+4q^{-5}t^{-7}
+4q^{-4}t^{-6}
+7q^{-3}t^{-5}
+7q^{-2}t^{-4}
+7q^{-1}t^{-3}
+7t^{-2}
+4qt^{-1}
+4q^2
+q^3t
+q^4t^2 .
\]
Daniele Celoria's grid homology for lens spaces code (\cite{CeloriaGridProgram}) applied to the quotient of the grid diagram for $T(2,3)\#T(2,3)$ tells us that the Poincar\'{e} polynomial of the $E^\infty$ page of Equation \ref{gridsequence} is 

\[
\begin{aligned}
q^{-13/4}t^{-17/4}
+ q^{-11/4}t^{-15/4}
+ 4q^{-9/4}t^{-13/4}
+ 4q^{-7/4}t^{-11/4} \\
+ 7q^{-5/4}t^{-9/4}
+ 7q^{-3/4}t^{-7/4}
+ 7q^{-1/4}t^{-5/4}
+ 7q^{1/4}t^{-3/4} \\
+ 4q^{3/4}t^{-1/4}
+ 4q^{5/4}t^{1/4}
+ q^{7/4}t^{3/4}
+ q^{9/4}t^{5/4}.
\end{aligned}
\]

By inspection we see that the rank in Alexander grading $A$ of the $E^2$ page matches with the rank in Alexander grading $\frac{2A+1}{4}$ on the $E^\infty$ page, indicating that the spectral sequence of Equation \ref{gridsequence} (and hence of Theorem \ref{mainthm}) collapses on the $E^2$ page for $T(2,3)\#T(2,3)$. This behavior is expected since the spectral sequence of Theorem \ref{mainthm} will collapse on the $E^2$ page for any knot which has at most one occupied Maslov grading in each Alexander grading. An exhaustive search through $10\times10$ grid diagrams for freely 2-periodic knots yields that $12n403$ is the only freely 2-periodic knot which admits an equivariant grid diagram of size $10$ or less and has multiple occupied Maslov gradings in a single Alexander grading. The total output of our program on this grid diagram for $12n403$ along with Celoria's output on the quotient grid diagram is contained in \cite{parikh_tau_grid_homology}; the $E^2$ page and $E^\infty$ page again have the same rank. We ran this code on all $12\times 12$ and a handful of $14\times 14$ equivariant grid diagrams for possible non-$E^2$ collapse candidates. Among the examples that completed within 512 GB of memory and 72 hours on a high performance computing cluster, the spectral sequence always collapsed on the $E^2$ page. Nevertheless, we conjecture that this spectral sequence does not always collapse on $E^2$, as the analogous spectral sequence for doubly periodic knots does not always either.

\newpage

\begin{Verbatim}[frame=single, fontsize=\small]
REDUCE2
input:
    (Graph, Edges, Edges2)

for(M=max; M>min; M=M-1):
    For s -> t in Edges,
          with s in Graph[M] and t in Graph[M-1]:

        J = {j != s : j -> t in Edges}
        K = {k : s -> k in Edges}
        L = {l : s -> l in Edges2}
        N = {n : n -> t in Edges2}
        # Update Edges2 in grading M.
        # This accounts for the basis changes j -> j + s.
        
        for j in J:
            for l in L:
                toggle j -> l in Edges2

        # Update Edges2 in grading M-1.
        # This accounts for replacing t by sum_{k in K} k.
        for k in K - {t}:
            for n in N:
                toggle n -> k in Edges2
                
        # Perform Reduce
        for j in J:
            for k in K:
                toggle  j -> k in Edges
                
        delete all Edges and Edges2 arrows incident to s or t
        delete s,t from Graph
        

return the remaining part of Edges2
\end{Verbatim}

\bibliographystyle{amsalpha}
\bibliography{bib}
\end{document}